\begin{document}

	\titlerunning{Chambers and walls} 
	\authorrunning{V.I. Zvonilov} 
		
\title{Chambers and walls in spaces of real algebraic curves of small degrees}
		
	\author{\firstname{V.~I.}~\surname{Zvonilov}}
	\email[E-mail: ]{zvonilov@itmm.unn.ru}
	\affiliation{Institute of Information Technology, Mathematics and Mechanics, Lobachevsky State University of Nizhny Novgorod, 23 Prospekt Gagarina, Nizhny Novgorod, 603950 Russia}
		
	\received{September 30, 2025} 

	\begin{abstract} 
		This paper reviews known results on the rigid isotopy classification of plane curves of degree $m\leq6$ and curves of small degrees on quadrics. The paper's study completes the rigid isotopy classification of nonsingular real algebraic curves of		bidegree (4,3) on a hyperboloid, begun by the author in earlier works. There are given previously missing proofs of the uniqueness of the connected components for 16 classes of real algebraic curves of bidegree (4,3)
		having a single node or a cusp.  The main technical tools are graphs of real trigonal curves on Hirzebruch surfaces.  Adjacency graphs of chambers and walls in the spaces of these curves are presented.
	\end{abstract}
	
	\subclass{14P25, 14H45, 05C90} 
	
	\keywords{spaces of real algebraic curves, real plane curves, real curves on quadrics, trigonal curves, embedded graphs} 
	
	\maketitle
\newtheorem{remark}{Remark}
\newtheorem{proposition}{Proposition}	
	
\newcommand{\R}{\mathbb{R}}
\newcommand{\Cb}{\mathbb{C}}
\newcommand{\Z}{\mathbb{Z}}
\newcommand{\N}{\mathbb{N}}
\newcommand{\ti}{\tilde}
\newcommand{\Pb}{\mathbb{P}}
\newcommand{\De}{\Delta}
\newcommand{\<}{\langle}
\newcommand{\ra}{\rangle}
\newcommand{\al}{\alpha}
\newcommand{\ga}{\gamma}
\newcommand{\om}{\omega}
\newcommand{\omt}{\tilde{\omega}}
\newcommand{\gat}{\tilde{\gamma}}
\newcommand{\nc}[1]{\mathrm{#1}}
\newcommand{\mc}[1]{\mathcal{#1}}
\newcommand{\mb}[1]{\mathbb{#1}}
\newcommand{\mf}[1]{\mathfrak{#1}}
\newcommand{\dg}[1]{\mathbf{#1}}
\newcommand{\ang}[1]{\big\langle #1\big\rangle}
\newcommand{\nl}{\nolimits}
\newcommand{\dfn}{\leftrightharpoons}
\newcommand{\ran}[1]{\nc{ran}\,{#1}}
\newcommand{\dom}[1]{\nc{dom}\,{#1}}

\newcommand{\Sk}{\operatorname{Sk}}
\newcommand{\Skdir}{\Sk_{\fam0 dir}}
\newcommand{\Skud}{\Sk_{\fam0 ud}}

\def\inserthyphen{\ifcat\next a-\fi\ignorespaces}
\def\pblack-{$\bullet$\futurelet\next\inserthyphen}
\def\pwhite-{$\circ$\futurelet\next\inserthyphen}
\def\pcross-{$\vcenter{\hbox{$\scriptstyle\times$}}$\futurelet\next\inserthyphen}
\def\black{\protect\pblack}
\def\white{\protect\pwhite}
\def\cross{\protect\pcross}

 \section{Introduction}
Hilbert's Sixteenth Problem (Part I) asks to study the topology of a nonsingular real algebraic curve, i.e., the mutual position of its connected components on the plane or an algebraic surface.

Let $S$ be a space of real algebraic curves of fixed degree on the projective plane or a surface, and
$\Delta\subset S$ be a subset of singular curves. The set $\Delta_1\subset
\Delta$ consists of curves with a single non-degenerate double point or a cusp. It is a topological manifold (although not a smooth submanifold of $S$).
The connected components of the set $S\setminus\Delta$ (respectively, $\Delta_1$)
are called \emph{chambers} (respectively, \emph{walls}).

In 1978, V.A. Rokhlin \cite{R} introduced the concept of a rigid isotopy and refined Hilbert's 16th problem by facing the challenge of enumerating the chambers of the space $S$.
Everywhere below, a rigid
isotopy is a path in a chamber or a wall.
\subsection{Principal results}
In the author's paper \cite[Theorem 2]{Z99} it was asserted that {\it a non-singular real curve of bidegree} (4,3)
{\it on a hyperboloid is determined up to a rigid
	isotopy by its complex scheme} (see Figure  \ref{ChW4-3} with the adjacency graph of chambers and walls obtained there).
	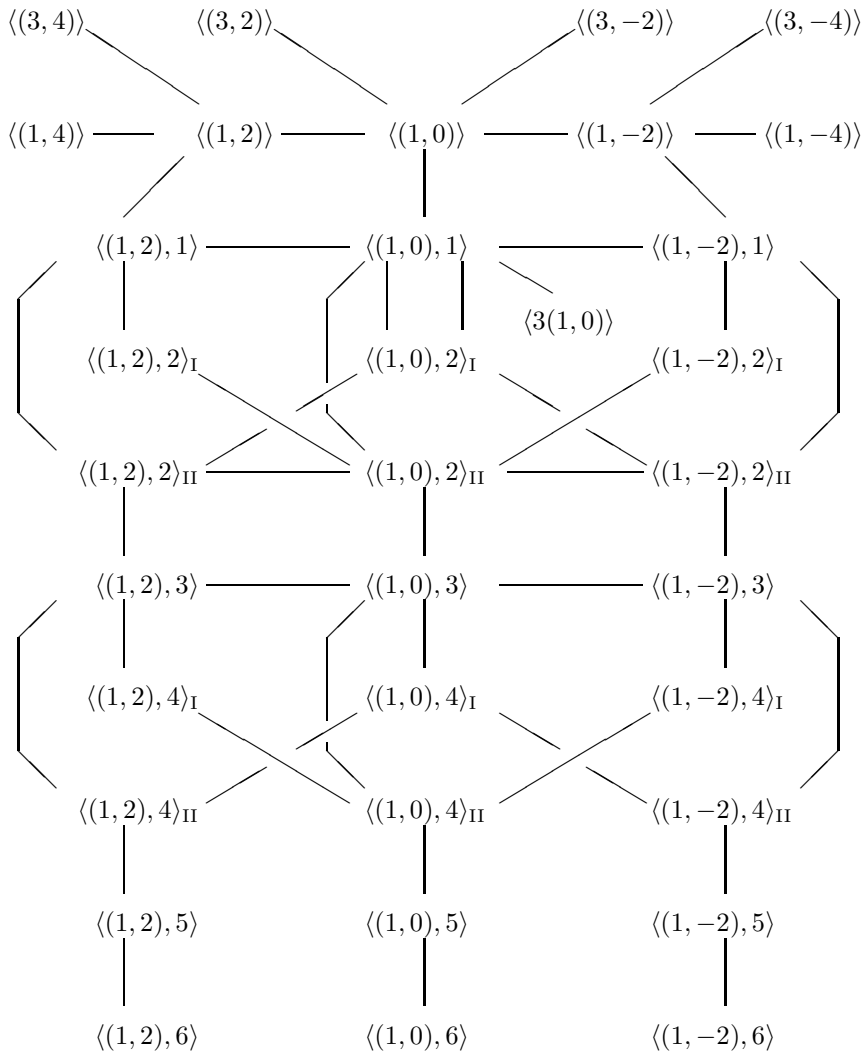
\begin{figure}[h]
		\unitlength=1mm
		\begin{picture}(120,140)(-65,0)
			
			\put(-5,120){$\<(1,0)\ra$}
			\put(-8,105){$\<(1,0),1\ra$}
			\put(-8,90){$\<(1,0),2\ra_{\rm I}$}
			\put(-8,75){$\<(1,0),2\ra_{\rm II}$}
			\put(-8,60){$\<(1,0),3\ra$}
			\put(-8,45){$\<(1,0),4\ra_{\rm I}$}
			\put(-8,30){$\<(1,0),4\ra_{\rm II}$}
			\put(-8,15){$\<(1,0),5\ra$}
			\put(-8,0){$\<(1,0),6\ra$}
			\put(13,95){$\<3(1,0)\ra$}
			\put(10,104){\line(5,-3){7}}
			\put(-8,104){\line(-1,-1){5}}
			\put(-13,99){\line(0,-1){11}}
			\put(-13,85){\line(0,-1){1}}
			\put(-13,84){\line(1,-1){5}}
			\put(-8,59){\line(-1,-1){5}}
			\put(-13,54){\line(0,-1){11}}
			\put(-13,40){\line(0,-1){1}}
			\put(-13,39){\line(1,-1){5}}
			\put(0,5){\line(0,1){9}}
			\put(0,20){\line(0,1){9}}
			\put(0,50){\line(0,1){9}}
			\put(0,65){\line(0,1){9}}
			\put(5,95){\line(0,1){9}}
			\put(-5,95){\line(0,1){9}}
			\put(0,110){\line(0,1){9}}
			
			\put(20,135){$\<(3,-2)\ra$}
			\put(45,135){$\<(3,-4)\ra$}
			\put(20,120){$\<(1,-2)\ra$}
			\put(45,120){$\<(1,-4)\ra$}
			\put(30,105){$\<(1,-2),1\ra$}
			\put(30,90){$\<(1,-2),2\ra_{\rm I}$}
			\put(30,75){$\<(1,-2),2\ra_{\rm II}$}
			\put(30,60){$\<(1,-2),3\ra$}
			\put(30,45){$\<(1,-2),4\ra_{\rm I}$}
			\put(30,30){$\<(1,-2),4\ra_{\rm II}$}
			\put(30,15){$\<(1,-2),5\ra$}
			\put(30,0){$\<(1,-2),6\ra$}
			
			\put(5,125){\line(3,2){15}}
			\put(30,125){\line(3,2){15}}
			\put(8,121){\line(1,0){11}}
			\put(36,121){\line(1,0){8}}
			\put(10,106){\line(1,0){19}}
			\put(11,76){\line(1,0){18}}
			\put(10,61){\line(1,0){19}}
			
			\put(10,77){\line(5,3){20}}
			\put(10,89){\line(5,-3){8}}
			\put(25,77){\llap{\line(-5,3){8}}}
			\put(10,32){\line(5,3){20}}
			\put(10,44){\line(5,-3){8}}
			\put(25,32){\llap{\line(-5,3){8}}}
			
			\put(50,104){\line(1,-1){5}}
			\put(55,99){\line(0,-1){15}}
			\put(55,84){\line(-1,-1){5}}
			\put(50,59){\line(1,-1){5}}
			\put(55,54){\line(0,-1){15}}
			\put(55,39){\line(-1,-1){5}}
			\put(40,5){\line(0,1){9}}
			\put(40,20){\line(0,1){9}}
			\put(40,50){\line(0,1){9}}
			\put(40,65){\line(0,1){9}}
			\put(40,95){\line(0,1){9}}
			\put(40,110){\line(-1,1){8}}
			
			\put(-20,135){\llap{$\<(3,2)\ra$}}
			\put(-45,135){\llap{$\<(3,4)\ra$}}
			\put(-20,120){\llap{$\<(1,2)\ra$}}
			\put(-45,120){\llap{$\<(1,4)\ra$}}
			\put(-30,105){\llap{$\<(1,2),1\ra$}}
			\put(-30,90){\llap{$\<(1,2),2\ra_{\rm I}$}}
			\put(-30,75){\llap{$\<(1,2),2\ra_{\rm II}$}}
			\put(-30,60){\llap{$\<(1,2),3\ra$}}
			\put(-30,45){\llap{$\<(1,2),4\ra_{\rm I}$}}
			\put(-30,30){\llap{$\<(1,2),4\ra_{\rm II}$}}
			\put(-30,15){\llap{$\<(1,2),5\ra$}}
			\put(-30,0){\llap{$\<(1,2),6\ra$}}
			
			\put(-5,125){\line(-3,2){15}}
			\put(-30,125){\line(-3,2){15}}
			\put(-8,121){\line(-1,0){11}}
			\put(-36,121){\line(-1,0){8}}
			\put(-10,106){\line(-1,0){19}}
			\put(-10,76){\line(-1,0){19}}
			\put(-10,61){\line(-1,0){19}}
			
			\put(-10,77){\line(-5,3){20}}
			\put(-9,89){\line(-5,-3){8}}
			\put(-29,77){\line(5,3){8}}
			\put(-10,32){\line(-5,3){20}}
			\put(-9,44){\line(-5,-3){8}}
			\put(-29,32){\line(5,3){8}}
			
			\put(-49,104){\line(-1,-1){5}}
			\put(-54,99){\line(0,-1){15}}
			\put(-54,84){\line(1,-1){5}}
			\put(-49,59){\line(-1,-1){5}}
			\put(-54,54){\line(0,-1){15}}
			\put(-54,39){\line(1,-1){5}}
			\put(-40,5){\line(0,1){9}}
			\put(-40,20){\line(0,1){9}}
			\put(-40,50){\line(0,1){9}}
			\put(-40,65){\line(0,1){9}}
			\put(-40,95){\line(0,1){9}}
			\put(-40,110){\line(1,1){8}}
			
		\end{picture}
		\caption{Chambers and walls in the space
			of bidegree (4,3) curves on a hyperboloid}\label{ChW4-3}
	\end{figure}			
			
To prove this, it was used the approach proposed in \cite{DIK00}
for obtaining a rigid isotopy classification of plane real quintics. The proof is based on Theorem 1 of \cite{Z99}, which specifies all the connected components of the space
of bidegree (4,3) curves that have a single non-degenerate double point or a
cusp. In the present paper, this Theorem 1 is proved in a different way -- using graphs of real trigonal curves on the Hirzebruch surface $\Sigma_3$ obtained from singular curves of bidegree (4,3). The proof of this theorem was started by the author in \cite{Z99}; in \cite{Z03} a gap in this proof was pointed out, which is eliminated in the present paper.

\subsection{Contents of the paper}
Section \ref{hyp} contains the necessary information on the topology of real algebraic curves, available in \cite{R}, \cite{V}, and in particular, of curves on quadrics (see \cite{Z91}) and of trigonal curves on Hirzebruch surfaces. Section \ref{rev} gives a survey of known results on the rigid isotopy classification of real algebraic curves of small degrees in the plane, quadrics, and Hirzebruch surfaces. Sections \ref{trig}
and \ref{S.rational}, following the papers \cite{DIK08} and \cite{Z21}, recall concepts related to real trigonal curves. In Section \ref{S.skeletons}
the concept of a skeleton, introduced in \cite{Z21} for maximally inflected trigonal curves, is extended to a wider class of curves. In Section \ref{deg9} we recall the definition of birational transformations of Hirzebruch surfaces, which are needed later to relate curves on a hyperboloid to trigonal curves on $\Sigma_3$, describe classes of curves of bidegree (4,3) that have a single non-degenerate double point or a
cusp, and fill the indicated gap, which consisted in the absence of a proof of the fact that  each of the classes of singular curves  $\omega^{\pm}_{inn}$, $\alpha^{\pm}_{lp}\<l\ra, l=0,1,2,4,5$, $ \alpha^{\pm}_{lp}\<3\ra_I$, $\alpha^{\pm}_{lp}\<3\ra_{II}$ (in the notation of \cite{Z99}, \cite{Z03}) on a hyperboloid is connected. The same arguments provide a proof of connectedness for the remaining classes of singular curves. As an application, in Section \ref{gen4}, the well-known rigid isotopy classification of nonsingular real trigonal curves of genus 4 on a quadratic cone is described in terms of  graphs and skeletons of these curves.
\section{Definitions and notation}\label{hyp}
\subsection{Real structure} 
A \emph{real algebraic variety} is a complex algebraic variety $V$ with an anti-holomorphic involution $c = c_V: V \rightarrow V$.
The set of fixed points ${\R} V = \mathrm{Fix}\, c$ is called the \emph{real part} of $V$. A regular morphism $f: V \rightarrow W$ between two real varieties is called \emph{real}, or \emph{equivariant}, if $f\circ c_V=c_W\circ f$.
\subsection{Real and complex schemes}
Let $X$ be the real projective plane or a real algebraic surface. The \emph{real scheme} of a real algebraic curve $C\subset X$ is the mutual arrangement scheme of 
 the components of its real part $\mathbb{R} C$ (real branches  for a
singular curve). An \emph{oval} is a component contractible on $\mathbb{R} X$. Each oval bounds a topological
disk in $\R X$, called the {\it interior} of the oval.

A real algebraic curve
$C$ belongs to \emph{type}
$I$ if the set $ \tilde{C}\setminus\mathbb{R}\tilde{C}$
is disconnected, where $\tilde{C}$ is the normalization of~$C$, and to type
$II$ if it
is connected. If $C$ belongs to type $I$, then $\mathbb{R}\tilde{C}$ divides $\tilde{C}$ into two
halves 
having $\mathbb{R}\tilde{C}$ as
their common boundary. The  
complex
orientations of the halves induces two opposite orientations on $\mathbb{R}\tilde{C}$ and, thus, on $\mathbb{R}C$,
called  \emph{complex orientations} of $C$. A real scheme endowed
with a type and, in the case of type I, with complex orientations, is called a {\it
	complex scheme}.
	
\subsection{Real quadrics} 
It is well known that a nonsingular real quadric $X\subset \Pb^3$ is isomorphic to $\Pb^1\times \Pb^1$, and the complex conjugation either preserves or permutes the factors. In the first case, the quadric is a \emph{hyperboloid} with the real part $\R X=\R \Pb^1\times \R \Pb^1$, and in the second case, an \emph{ellipsoid} with the real part $\R X\cong S^2$.

Fix a pair of lines $P_1, P_2$ that generate the quadric $X$. The fundamental classes
$[ P_1]$, $[ P_2]$ form a basis for the group $H_2(X)\cong \Z \oplus \Z$.
Let $C$ be an algebraic curve on $X$. Then $[ C]=
m_2 [ P_1]+m_1 [ P_2]$ for some non-negative integers $m_1$, $m_2$.
The pair $(m_1,m_2)$ is called the {\it bidegree} of  $C$. If $[x_0:x_1]$,
$[y_0:y_1]$ are homogeneous coordinates on the lines $P_1$, $P_2$, then the curve $C$
is defined by a  polynomial
$$F(x_0,x_1;y_0,y_1)=\sum_{i,j=1}^{m_1,m_2} a_{i,j}x_1^i x_0^{m_1-i}y_1^j y_0
^{m_2-j}, $$ that is homogeneous both in $x_0, x_1$ and  $y_0, y_1$
of degrees  $m_1$ and $m_2$ respectively.
On a hyperboloid, the curve $C$ is real
if and only if all $a_{i,j}$ can be chosen to be real. On an ellipsoid,
$C$ is real iff it is possible to choose a polynomial $F$ for which $a_{ij}=\bar{a}_{ji}$ (in particular, $m_1=m_2$).	
\subsection{Coding  a real scheme}
Let $C$ be a nonsingular
real algebraic curve on a hyperboloid $X$. The real part $\R C$ can have components
of two types: contractible in $\R X$ (i.e., ovals) and noncontractible. 
The number of ovals is denoted by $l$, and the number
of noncontractible components by $h$. 
The fundamental classes $[\R P_1]$, $[\R P_2]$,
endowed with certain (fixed) orientations, form a basis for the group
$H_1(\R X)\cong
\Z \oplus \Z$. All noncontractible components $N_1,...,N_h$ realize the same nonzero class $(c_1,c_2)$ in $H_1(\R X)$, where $c_1$ and $c_2$ are coprime. The real scheme of~$\R C\subset \R X$ is encoded as follows:
$\langle (c_1,c_2), {\rm scheme}_1,(c_1,c_2), {\rm scheme}_2,...,(c_1,c_2), {\rm scheme}_h
\rangle,$
where ${\rm scheme}_1,..., {\rm scheme}_h$ are the arrangement schemes of ovals lying in the connected
components of~$\R X\setminus (N_1\cup ... \cup N_h)$ (cf. \cite{V}, \cite{Z91}).

If $X$ is an ellipsoid, then all components of~$\R C$ are ovals; their number is denoted by $l$. In this case, we
fix a point $\infty
\in \R X\setminus \R C$, called the {\it exterior point}, and for the oval
$\omega\subset \R X$, we define its {\it interior} as the component of the surface
$\R X\setminus \omega$ that does not contain $\infty$. This gives rise to a natural partial order on the set of ovals, and the arrangement of ovals
is encoded in the same way as in \cite{V}.

If we need to specify the type of a curve with a real scheme
$\<B\ra$  we write $\<B\ra_{\rm I}$ and
$\<B\ra_{\rm II}$.
\subsection{Trigonal curves on Hirzebruch surfaces}
The Hirzebruch surface $\Sigma_k, k\geq0$, is the space of the fibration $q:\Sigma_k\rightarrow \Pb^1$ with fiber $\Pb^1$, i.e., a rational ruled surface, with exceptional
section $E_k$, $E_k^2=-k$. The fibers of the fibration~$q$ are called
\emph{vertical}; for example, we speak of vertical tangents, vertical
inflections, etc. The surface $\Sigma_0=\Pb^1\times\Pb^1$ is a hyperboloid, in which any curve of bidegree $(0,1)$ can be taken as an exceptional section.

Further, $\Sigma_k$ is considered a real surface with exceptional real
section.

A \emph{trigonal curve} is a reduced curve $C\subset\Sigma_k$ that contains neither the exceptional section nor a fiber
as a component and such that the restriction $q|_C$ is a mapping of degree three. A trigonal curve $C$ is called \emph{proper} if
$C~\cap~E_k=~\varnothing$.

According to \cite[p. 3.1.1]{Degt}, for a proper trigonal curve $C\subset\Sigma_k, k\geq1,$ there exists an affine chart $(x,y)$ in which the exceptional section $E_k$ is defined by the equation $y=\infty$, and the curve $C$ is defined by the Weierstrass equation

$$\,y^3+b(x)y+w(x)=0,$$
where $b, w$ are polynomials, $\deg b \leq 2k, \deg w \leq 3k$. We can extend the chart to $\Sigma_k\smallsetminus E_k$ by considering $b, w$ to be homogeneous polynomials of degrees $2k, 3k$.

A real proper trigonal curve $ C $ is called \emph{almost generic}
if all critical points of the restriction $ q|_C $, i.e., all roots of the discriminant
$ d(x) = 4b^3 + 27w^2 $, are simple (so $ C $ is nonsingular), and \emph{maximally inflected} if all roots of the discriminant are real (and not necessarily simple).
\subsection{Deformations}\label{deformation}
A \emph{deformation} of a trigonal curve
$C\subset\Sigma_k$ is a deformation
of the pair $(q:\Sigma_k\rightarrow \Pb^1, C)$
in the Kodaira-Spencer sense.
A deformation of an almost generic curve
is called \emph{fiberwise} if the curve remains
almost generic throughout the deformation.

The \emph{deformation equivalence} of real trigonal curves is defined by
the equivalence relation generated by equivariant
fiberwise deformations and
real isomorphisms.
\subsection{Trinomial and tetragonal curves}
A generalization of the proper trigonal curve is the \textit{trinomial curve} -- an algebraic curve on the Hirzebruch surface $\Sigma_k$, defined by the equation
\begin{equation}
	y^n+b(x)y^{n-k}+w(x)=0,\label{1}
\end{equation}
where $b, w$ are homogeneous polynomials of degrees $k(n-1)$, $kn$ with $n>1$.

A real \emph{tetragonal} curve $C$ on
$\Sigma_k$ is defined 
by the equation

$$\,y^4+a(x)y^2+b(x)y+w(x)=0,$$

where $a, b, w$ are homogeneous polynomials, $\deg a=2k$, $\deg b = 3k, \deg w = 4k$.
\section{Review of Known Results on Rigid Isotopy Classification}\label{rev}

By now, a rigid isotopy classification has been obtained:
for nonsingular plane curves of degree $m\leq 6$, see \cite{R}, \cite{DIK00}, \cite{Nik}; for plane curves
of degree $6$ with one non-degenerate double point \cite{It91}, \cite{It92}; for
curves of bidegrees $(m,1)$, $(m,2)$, $(3,3)$, $(4,4)$ on a hyperboloid and an
ellipsoid \cite{DK00}, \cite{Z96}, \cite{DZ}, \cite{NS05}; for hyperelliptic curves on Hirzebruch surfaces $\Sigma_k$, see \cite{DIK00}; for nonsingular curves of bidegree $(m,3)$ on Hirzebruch surface $\Sigma_1$, see \cite{Z00}; for tetragonal curves on $\Sigma_1$, $\Sigma_2$, $\Sigma_4$, see \cite{NS05}, \cite{NS07}; 
for real trinomial curves on $\Sigma_k$
with the maximal number of ovals \cite{Z06}.

Let $S_m$ denote the space of plane real algebraic curves of degree $m$, and $S_{m,n}$ the space of real algebraic curves of bidegree $(m,n)$ on a quadric. Figures \ref{ChWC1-5}, \ref{ChWC33h}, and \ref{ChWC33e} show the adjacency graphs of the chambers and walls of the spaces $S_1$ -- $S_5$ and $S_{3,3}$ (for a hyperboloid and an ellipsoid).
\begin{figure}[h]
	\begin{center}
		\scalebox{1}{\includegraphics{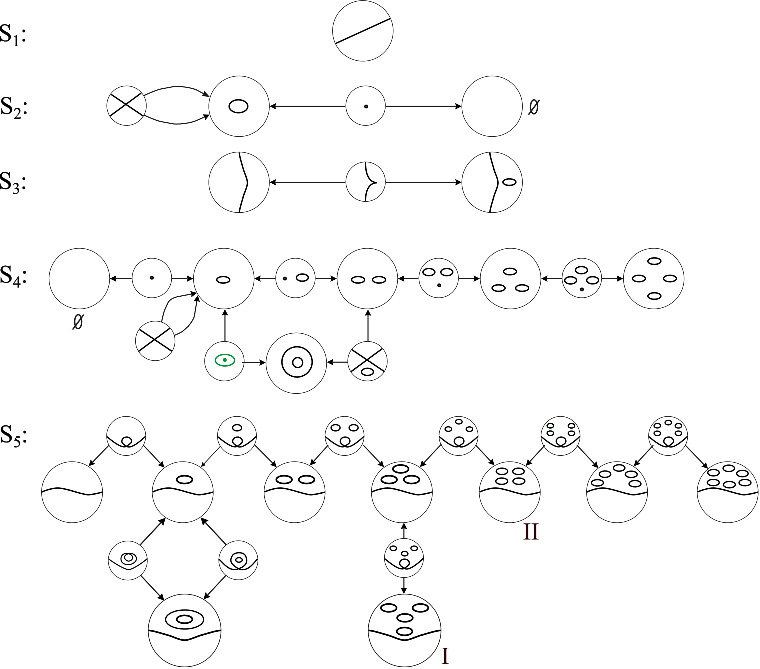}}\\
	\end{center}
	\caption{Adjacency graphs of the chambers and walls in the spaces of plane curves of degrees 1 -- 5}\label{ChWC1-5}
\end{figure}
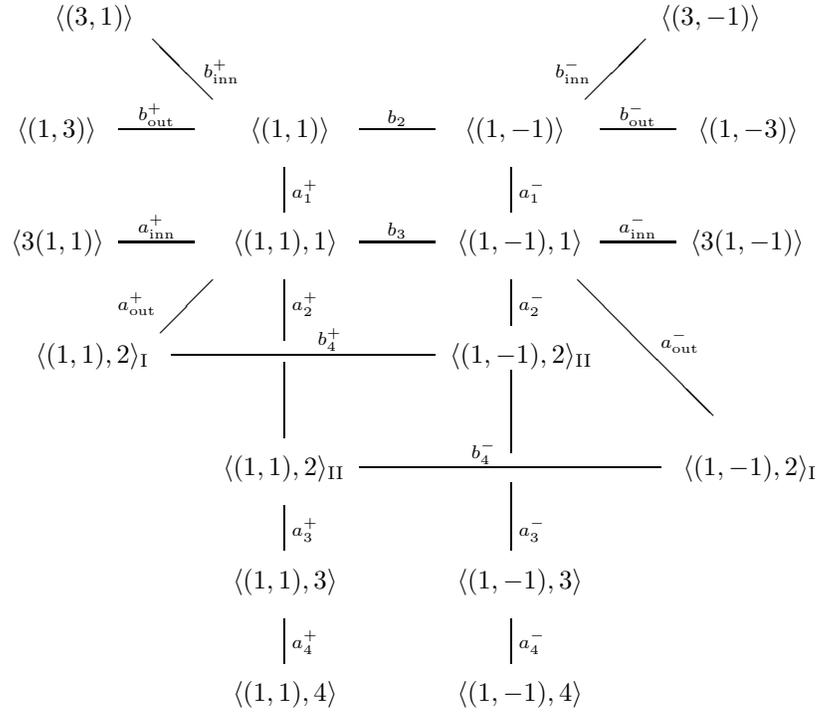
\begin{figure}[h]
	\unitlength=1mm
	\begin{picture}(110,100)(-65,0)
		
		\put(35,90){$\<(3,-1)\ra$}
		\put(9,75){$\<(1,-1)\ra$}
		\put(40,75){$\<(1,-3)\ra$}
		\put(8,60){$\<(1,-1),1\ra$}
		\put(39,60){$\<3(1,-1)\ra$}
		\put(7,45){$\<(1,-1),2\ra_{\rm II}$}
		\put(8,15){$\<(1,-1),3\ra$}
		\put(8,0){$\<(1,-1),4\ra$}
		\put(38,30){$\<(1,-1),2\ra_{\rm I}$}
		
		\put(24,56){\line(1,-1){18}}
		\put(25,80){\line(1,1){8}}
		
		\put(35,47){${\scriptstyle a^-_{\rm out}}$}
		\put(21,83){${\scriptstyle b^-_{\rm inn}}$}
		
		\put(27,76){$\stackrel{b^-_{\rm out}}{\rule{10mm}{.4pt}}$}
		\put(-5,76){$\stackrel{b_2}{\rule{10mm}{.4pt}}$}
		\put(27,61){$\stackrel{a^-_{\rm inn}}{\rule{10mm}{.4pt}}$}
		\put(-5,61){$\stackrel{b_3}{\rule{10mm}{.4pt}}$}
		\put(-5,31){$\stackrel{b^-_4\qquad}{\rule{40mm}{.4pt}}$}
		
		\put(15,67){$\rule[-2mm]{.4pt}{6mm}\lefteqn{\;{\scriptstyle a^-_1}}$}
		\put(15,52){$\rule[-2mm]{.4pt}{6mm}\lefteqn{\;{\scriptstyle a^-_2}}$}
		\put(15,7){$\rule[-2mm]{.4pt}{6mm}\lefteqn{\;{\scriptstyle a^-_4}}$}
		\put(15,22){$\rule[-2mm]{.4pt}{9mm}\lefteqn{\;{\scriptstyle a^-_3}}$}
		\put(15,33){$\rule{.4pt}{11mm}$}

		\put(-35,90){\llap{$\<(3,1)\ra$}}
		\put(-9,75){\llap{$\<(1,1)\ra$}}
		\put(-40,75){\llap{$\<(1,3)\ra$}}
		\put(-8,60){\llap{$\<(1,1),1\ra$}}
		\put(-39,60){\llap{$\<3(1,1)\ra$}}
		\put(-7,30){\llap{$\<(1,1),2\ra_{\rm II}$}}
		\put(-8,15){\llap{$\<(1,1),3\ra$}}
		\put(-8,0){\llap{$\<(1,1),4\ra$}}
		\put(-33,45){\llap{$\<(1,1),2\ra_{\rm I}$}}
		
		\put(-28,56){\llap{\line(-1,-1){7}}}
		\put(-29,80){\llap{\line(-1,1){8}}}
		
		\put(-32,52){\llap{${\scriptstyle a^+_{\rm out}}$}}
		\put(-21,83){\llap{${\scriptstyle b^+_{\rm inn}}$}}
		
		\put(-27,76){\llap{$\stackrel{b^+_{\rm out}}{\rule{10mm}{.4pt}}$}}
		\put(-27,61){\llap{$\stackrel{a^+_{\rm inn}}{\rule{10mm}{.4pt}}$}}
		\put(5,46){\llap{$\stackrel{\qquad b^+_4}{\rule{35mm}{.4pt}}$}}
		
		\put(-15,67){\llap{$\rule[-2mm]{.4pt}{6mm}\lefteqn{\;{\scriptstyle a^+_1}}$}}
		\put(-15,52){\llap{$\rule[-4mm]{.4pt}{8mm}\lefteqn{\;{\scriptstyle a^+_2}}$}}
		\put(-15,7){\llap{$\rule[-2mm]{.4pt}{6mm}\lefteqn{\;{\scriptstyle a^+_4}}$}}
		\put(-15,22){\llap{$\rule[-2mm]{.4pt}{6mm}\lefteqn{\;{\scriptstyle a^+_3}}$}}
		\put(-15,35){\llap{$\rule{.4pt}{10mm}$}}
		
	\end{picture}
	\caption{Adjacency graph of chambers in the space of curves of bidegree (3,3) on a hyperboloid}\label{ChWC33h}
	\end{figure}
	\begin{figure}[h!]
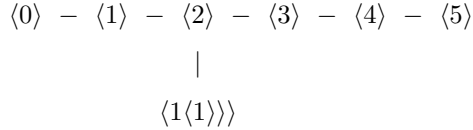

	$$
	\begin{array}{ccccccccccc}
		\langle 0\rangle&-&
		\langle 1\rangle&-&
		\langle 2\rangle&-&
		\langle 3\rangle&-&
		\langle 4\rangle&-&
		\langle 5\rangle\\
		&&&&|&&&&&&\\
		&&&\multicolumn{3}{c}{\langle 1\langle 1\rangle\rangle\rangle}
	\end{array}
	$$
	\caption{Adjacency graph of chambers in the space of curves of bidegree (3,3) on an ellipsoid}\label{ChWC33e}
	\end{figure}
	
	For nonsingular real trigonal curves
	on a ruled surface
	over a base of any genus,
	equivariant deformations were studied and an exact description
	of the deformation classes of curves  with the maximal and premaximal number of ovals (see \cite{DIK00}) and of type I curves (see \cite{DIZ}) was obtained in terms of special type graphs.  
	
	\section{Real Trigonal Curves}\label{trig}
	For a real trigonal curve $ C$, the rational function $j=j_C=\frac{4b^3}{\Delta}=1-\frac {27w^2}{\Delta}$ is called the \emph{$j$-invariant} of this curve. A curve with a constant $j$-invariant is called \emph{isotrivial}.
	An almost generic curve is called \emph{generic} if, for each real critical value $t$ of its $j$-invariant, the multiplicity of each root of the equation ~$j(x)=t$ is~$3$ for $t=0$, is~$2$ for $t\neq 0$, and all these roots are real for $t\in \R\smallsetminus\{0,1\}$. Any almost generic trigonal curve can be made generic by a small change in the coefficients of the curve's equation.
	
	A real  trigonal curve $ C $ (possibly singular) is called
	\emph{hyperbolic} if the restriction ${\R} C \rightarrow {\R} \Pb^1$ of the mapping~$q$ is a three-sheeted covering on the set of non-singular points of the curve.
	
	The real part of a nonsingular nonhyperbolic curve $C$ has a unique
	\emph{long component} $L$, characterized by the fact that the restriction $L \rightarrow {\R} \Pb^1$ of $q$
	has the degree~$\pm 1$. For all other components of ${\R} C$,
	called \emph{ovals}, this degree is equal to~$0$. Let $Z \subset {\R} \Pb^1$ be the set of points
	with more than one preimage in ${\R} C$. Each oval is mapped by  $q$
	to a component of $Z$, which is also called an
	\emph{oval}. The remaining components of $Z$,
	as well as
	their preimages
	in~$L$, are called \emph{zigzags}.
	\subsection{Dessins}\label{graph}
	Below, we need graphs on a disk (\emph{dessins}, as special cases of trichotomic graphs, in the terminology of \cite{Degt},
	\cite{JP}),
	isomorphic to the dessins of real trigonal curves (see the definition in the next paragraph). According to \cite[Corollary 4.13]{Degt}, any such graph is the graph of some trigonal curve. Unless otherwise stated, throughout this section, the term \emph{dessin} implies the existence of a real trigonal curve with such a dessin.
	
	Let $ D $ be the disk obtained as the quotient of the complex projective line $\Pb^1 $ by complex conjugation, and $\mathrm{pr}: \Pb^1 \rightarrow D $ be the projection. Points, segments, etc., lying
	on the boundary circle~$\partial D$ are called \emph{real}.
	For the $ j $-invariant $j_C:\Pb^1\rightarrow \Pb^1= \Cb\cup\{\infty\}$ of a nonisotrivial real trigonal curve $ C\subset\Sigma_k $, we endow the line $ \R \Pb^1$ lying in the image of this function with an orientation determined by the order in $\R$ and color it as follows: let $0$, $1$,
	and~$\infty$ be, respectively, the \black--, \white--, and
	\cross-- vertex; $(\infty,0)$, $(0,1)$, and $(1,\infty)$ be, respectively, a \rm{solid}, \rm{bold}, and \rm{dotted} edge.
	Lifting this orientation and coloring to the graph $\Gamma_C=\mathrm{pr}(j_C^{-1}({\R} \Pb^1))$, we obtain the \emph{dessin} of  $C$. Its \black--, \white--, and
	\cross--vertices, which are branch points (critical points of the $ j $-invariant) with critical values 0, 1 and $\infty $, are called \emph{essential}; the remaining vertices, which are
	branch points with real critical values other than 0, 1, $\infty$, are called \emph{monochrome}.
	Monochrome vertices
	are classified as solid, bold, and dotted
	according to the edges that adjoin them.
	A \emph{monochrome cycle} in~$\Gamma_C$ is a cycle all of whose vertices
	are monochrome; therefore, all of its edges and vertices are of the same color.
	The definition of dessin implies
	that it has no directed monochrome cycles.
	
	Glue two copies of  $D$ into a sphere $S$, turning the disks into hemispheres. Let $p:S\rightarrow D$ be the projection identifying the copies, and $\Gamma'_C=p^{-1}(\Gamma_C)$ be the graph on the sphere with the coloring induced by the projection. The \emph{full valency} of a vertex $v$ in the dessin $\Gamma_C$ is the valency of any vertex $v'\in p^{-1}(v)$ in the graph $\Gamma'_C$. The degree of the mapping $j_C$ is $6k, k\in \N$, so the sum of the full valencies of all \black--, \white--, or
	\cross--vertices of the dessin~$\Gamma'_C$ is $12k$. The number $3k$ is called the \emph{degree} of the dessin $\Gamma_C$; it is equal to the degree of the polynomial $w$ in the equation of the curve $C$. A dessin of degree~$3$ is called \emph{cubic}.
	
	A \emph{singular vertex} of a dessin is a \cross--vertex whose total valency is greater than two. It corresponds to a singular point of the curve.
	
	In the figures, the \emph{real part} $ \partial D \cap\Gamma $ of the dessin $\Gamma $ and its subsets
	are indicated by wide gray lines.
	
	For a dessin
	$\Gamma\subset D$
	the closures of the connected components of~$D\smallsetminus\Gamma$
	are called \emph{regions} of~$\Gamma$. A region with three essential vertices on its
	boundary is called \emph{triangular}.
	
	A dessin  is called
	\emph{unramified} if all its \cross--vertices
	are real.
	In other words, dessins corresponding to
	maximally inflected curves are unramified.
	
	\subsection{Graph Segments}\label{pillar}
	A dessin $\Gamma$
	is called
	\emph{hyperbolic} if all its real edges are dotted. It corresponds to a hyperbolic curve.
	
	For a dessin $\Gamma$, the union of the closures of certain identically colored real edges   is called a \emph{segment} if it is
	homeomorphic to a segment.
	A dotted (bold)
	segment
	is called \emph{maximal}
	if its endpoints are
	two non-singular
	\cross--vertices
	(respectively, two \black--vertices); in this case, the dotted segment must not contain any singular vertices.
	Recall (see the beginning of Section \ref{trig}) that a maximal dotted segment of a non-hyperbolic dessin $\Gamma$ containing an even/odd number of \white--vertices is called an oval/zigzag; it is the projection of an oval/zigzag of the corresponding trigonal curve.
	
	A maximal dotted/bold segment
	with an even/odd number of
	\white--vertices
	is called a
	\emph{wave/jump}.
	\subsection{Elementary moves of dessins}\label{graphmodif}
	Two dessins are said to be
	\emph{equivalent} if, up to a homeomorphism $f$
	of the disk $D$, they can be connected
	by a finite sequence
	of isotopies and the following \emph{elementary moves}:
	\begin{itemize}
		\item[--]
		\emph{monochrome modification}, see Figure
		\ref{fig.moves}(a);
		\item[--]
		\emph{creating} (\emph{destroying}) of a bridge\emph{}, see Figure
		\ref{fig.moves}(b),
		where a \emph{bridge} is a pair
		of monochrome vertices connected by a real monochrome edge;
		\item[--]
		\emph{\white-in} and its inverse \emph{\white-out}, see Figure
		\ref{fig.moves}(c) and~(d);
		\item[--]
		\emph{\black-in} and its inverse \emph{\black-out}, see Figure
		\ref{fig.moves}(e) and~(f);
		\item[--]
		\emph{\cross-in} and its inverse \emph{\cross-out}, see Figure
		\ref{fig.moves}(g) and~(h).
	\end{itemize}
	(In the first two cases, the move
	is considered valid only if it results
	in a graph without directed monochrome cycles, i.e., again a dessin.)
	An equivalence of two dessins is called \emph{restricted} if  $f={\rm id}_D$ and the above isotopies preserve ovals, zigzags, waves, and jumps as sets.
	
	\begin{figure}[h]
		\begin{center}
			\includegraphics{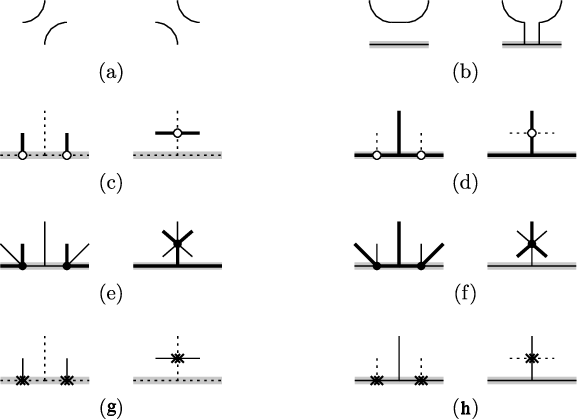}\\
		\end{center}
		\caption{Elementary  moves of dessins}\label{fig.moves}
	\end{figure}
	\subsection{Rigid Isotopies and Weak Equivalence}\label{s.rigid}
	Elementary  moves of a dessin do not allow merging vertical tangents of a trigonal curve, but such merging is possible under rigid isotopies of nonsingular curves. Therefore, to these moves, it is necessary to add
	a pair of mutually inverse operations:
	straightening/creating a zigzag, the first of which consists of
	merging the two vertical tangents
	bounding the zigzag
	into a single tangent at the inflection point
	and then turning them into a pair of complex conjugate imaginary fibers of the bundle $q$. At the dessin level,
	these operations are shown in Figure \ref{fig.zigzag}.
	
	\begin{definition}
		{\rm Following \cite{JP}, we call two dessins \emph{weakly equivalent} if they
		are related by a sequence of isotopies,
		elementary moves (see Figure~\ref{graphmodif})
		and the operation of
		\emph{straightening/creating a zigzag}
		which consists of replacing one of the fragments shown in
		Figure \ref{fig.zigzag} with another.}
	\end{definition}
	
	\begin{figure}[h]
		\begin{center}
			\includegraphics{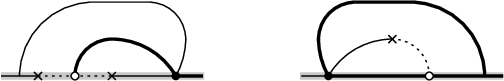}\\
		\end{center}
		\caption{Straightening/creating a zigzag}\label{fig.zigzag}
	\end{figure}
	The following statement is easily deduced from~\cite{DIK08}.
	
	\begin{proposition} \label{equiv.zigzag}
		Two generic real trigonal curves are rigidly isotopic
		if and only if their dessins are weakly equivalent.
	\end{proposition}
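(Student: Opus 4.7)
The plan is to bootstrap from the classification of almost generic real trigonal curves established in~\cite{DIK08} and to account for the additional freedom that a \emph{rigid} isotopy provides over a fiberwise deformation. By~\cite{DIK08} (applied to the almost generic case), two almost generic real trigonal curves are connected by an equivariant fiberwise deformation through almost generic curves if and only if their dessins are equivalent in the sense of Section~\ref{graphmodif}, i.e.\ related by isotopies of $D$ and the elementary moves listed in Figure~\ref{fig.moves}. Indeed, a generic one-parameter family of almost generic curves meets the wall corresponding to exactly one elementary move, and conversely each elementary move can be realized by a local fiberwise deformation. This establishes the proposition provided one never needs to leave the almost generic stratum.

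I would then analyze what extra bifurcations can occur along a rigid isotopy of \emph{nonsingular} trigonal curves that are not available to an almost generic deformation. Since the curve $C$ itself remains nonsingular, the only way the discriminant $d(x)=4b^{3}+27w^{2}$ can fail to have simple roots is if two real roots of $d$ coalesce (the corresponding two vertical tangents merging into a single tangent at a vertical inflection point) and then disappear into a complex conjugate pair, or vice versa. This is precisely the geometric event that creates or destroys a zigzag, since a zigzag is the projection of a pair of adjacent simple critical points of $q|_{C}$ bounding a dotted segment with an odd number of \white-vertices. At the dessin level, the two \cross-vertices bounding the zigzag collide on $\partial D$ and leave the real axis as a pair of complex conjugate \cross-vertices, which is exactly the operation depicted in Figure~\ref{fig.zigzag}.

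Hence the plan splits into the two standard directions. For the ``only if'' direction, I would take a rigid isotopy $\{C_{t}\}$ between two generic trigonal curves, perturb it to a path transverse to all codimension-one strata of the discriminant in the space of nonsingular trigonal curves, and observe that each transverse crossing is either (i)~a standard dessin wall yielding an elementary move, accounted for by~\cite{DIK08}, or (ii)~a coalescence of two real critical points of $q|_{C_{t}}$, yielding a straightening or creation of a zigzag. For the ``if'' direction, every elementary move and the zigzag operation admit an explicit local realization by a one-parameter family of trigonal curves, with the curve remaining nonsingular throughout the zigzag operation and crossing a single wall of~$\Delta_{1}$ in each elementary move; concatenating these local realizations yields the required rigid isotopy between the endpoints.

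The main obstacle I anticipate is verifying that no further bifurcation of the dessin can occur during a rigid isotopy of nonsingular curves beyond the elementary moves and the zigzag move. This requires a codimension count in the space of pairs $(b,w)$: one must check that the locus where $j_{C}$ develops a new critical behavior on~$\R\Pb^{1}$ (monochrome modification, bridge, \black/\white/\cross-in or out) together with the zigzag locus (two real simple roots of $d$ colliding into a real double root, while $C$ stays nonsingular) exhaust all codimension-one strata that a generic path can meet. Once this local picture is established, the global statement follows from the general transversality argument used in~\cite{DIK08}.
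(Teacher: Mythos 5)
Your proposal matches the paper's intended argument: the paper gives no detailed proof, stating only that the proposition ``is easily deduced from~\cite{DIK08}'', and the surrounding text of Subsection~\ref{s.rigid} sets up exactly the dichotomy you describe --- fiberwise deformations of almost generic curves correspond to elementary moves of dessins via~\cite{DIK08}, while the extra codimension-one event available to a rigid isotopy of nonsingular curves is the merging of two real vertical tangents into a vertical inflection followed by their departure into a complex conjugate pair, i.e.\ straightening/creating a zigzag. Your sketch, including the transversality and codimension-count caveat, is the argument the paper is implicitly relying on.
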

	
	As mentioned in \cite{DIZ}, the following theorem
	can be deduced, for example, from~\cite[Proposition 5.5.3, Proposition 5.6.4]{DIK08},
	see also~\cite{Z06}.
	\begin{theorem} \label{max.inflected}
		Any nonhyperbolic nonsingular
		real trigonal curve
		on a Hirzebruch surface
		is rigidly isotopic to a maximally inflected one.
	\end{theorem}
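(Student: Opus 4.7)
After a small generic perturbation within the rigid isotopy class of $C$, Proposition~\ref{equiv.zigzag} reduces the theorem to a purely combinatorial claim about dessins: the dessin $\Gamma_C$ of every generic nonhyperbolic nonsingular real trigonal curve is weakly equivalent to an \emph{unramified} one, i.e.\ a dessin all of whose \cross--vertices lie on $\partial D$. This is the right reformulation because \cross--vertices of $\Gamma_C$ are projections of zeros of the discriminant $d = 4b^3 + 27w^2$, so maximal inflection of the underlying curve translates exactly to the reality of every \cross--vertex of the dessin.

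The main engine of the reduction is the zigzag-creation move of Figure~\ref{fig.zigzag}, which pulls a complex conjugate pair of interior \cross--vertices onto $\partial D$ as the two endpoints of a newly created zigzag. My plan is to induct on the number $n(\Gamma)$ of interior \cross--vertices of $\Gamma$. Whenever $n(\Gamma) > 0$, I would pick a conjugate pair $\{v,\bar v\}$ lying as close to $\partial D$ as possible, then apply a finite sequence of elementary moves from Figure~\ref{fig.moves} (monochrome modifications, bridge creation/destruction, and the \white--, \black-- and \cross--in/out moves) to transport the arc joining $v$ and $\bar v$ toward the boundary and to unclutter its immediate neighborhood so that the local configuration matches the left-hand side of Figure~\ref{fig.zigzag}. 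A single zigzag-creation then replaces $\{v,\bar v\}$ by a real zigzag and drops $n(\Gamma)$ by two.

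The nonhyperbolicity hypothesis is precisely what permits the target configuration: in a hyperbolic dessin every real edge is dotted by definition, so there is no bold or solid real structure on $\partial D$ into which a new zigzag can be inserted, but in the nonhyperbolic case the boundary carries at least one nondotted real edge. Combined with the count of full valencies (the sum over each color class of essential vertices equals $12k$), this guarantees enough combinatorial room near $\partial D$ to accommodate the freshly created pair of real \cross--vertices without violating the ``no directed monochrome cycle'' constraint built into the definition of a dessin.

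The hardest part will be the combinatorial lemma underlying the inductive step: one must verify that, regardless of how the white, black, and monochrome vertices and the existing ovals, waves, and jumps are arranged around the chosen pair $\{v,\bar v\}$, a finite sequence of admissible moves can always be exhibited that brings a neighborhood of the arc $\{v,\bar v\}$ into the zigzag-creation template \emph{without ever increasing} $n(\Gamma)$ along the way. This case analysis runs parallel to \cite[Prop.~5.5.3, Prop.~5.6.4]{DIK08}, and it is where the nontrivial content of the theorem lies; once it is in place, iterating the inductive step produces a weakly equivalent unramified dessin and, via Proposition~\ref{equiv.zigzag}, a rigid isotopy from $C$ to a maximally inflected curve.
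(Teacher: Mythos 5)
The paper offers no proof of this theorem at all: it is quoted as a known result, ``deduced, for example, from \cite[Proposition 5.5.3, Proposition 5.6.4]{DIK08}, see also \cite{Z06}''. So there is no argument in the paper to compare yours against line by line. Your reformulation is the correct one (maximal inflection $\Leftrightarrow$ all \cross--vertices of the dessin are real, and Proposition~\ref{equiv.zigzag} translates rigid isotopy of generic curves into weak equivalence of dessins), and the zigzag--creation move of Figure~\ref{fig.zigzag} is indeed the mechanism by which a conjugate pair of imaginary vertical tangents is made real. In that sense your outline is the standard route, and it is the route taken by the sources the paper cites.

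However, as a proof your proposal has a genuine gap, and you have located it yourself: the entire content of the theorem is the inductive step, namely that for \emph{every} configuration of an interior \cross--vertex in a nonhyperbolic dessin there is a finite sequence of elementary moves from Figure~\ref{fig.moves} that produces the local template of Figure~\ref{fig.zigzag} without obstruction from the ``no directed monochrome cycle'' condition. You assert that this ``runs parallel to'' \cite[Prop.~5.5.3, 5.6.4]{DIK08} but do not carry out the case analysis, so the argument reduces to the same citation the paper makes; the surrounding induction on the number of interior \cross--vertices adds plausibility but no proof. Two smaller points: the heuristic that nonhyperbolicity provides ``a nondotted real edge into which a zigzag can be inserted'' is not a proof that the template is always reachable (and the valency count you invoke is of full valencies summing to $12k$, not of vertices); and at the end of the induction one must check that the resulting unramified dessin is still realized by a \emph{generic nonsingular} curve so that Proposition~\ref{equiv.zigzag} applies in the reverse direction --- this is covered by the realizability convention of Subsection~\ref{graph} but should be said. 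To make the proof self-contained you would need to prove the transport lemma; otherwise the honest form of the argument is exactly the paper's: cite \cite[Propositions 5.5.3 and 5.6.4]{DIK08}.
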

	Within the framework of weak  equivalence of dessins, we need the following two transformations that reduce the number of real \white--vertices in the dessin:
	\begin{enumerate}
		\item[--] removal of neighboring jumps and zigzags, consisting of straightening the zigzag followed by \white-- and \black-in, as well as the inverse transformation (see Figure \ref{JZ});
		\item[--] removal of a pair of neighboring zigzags, consisting of \black-out, straightening two zigzags followed by \white-- and \black-in, as well as the inverse transformation (see Figure \ref{ZZ}).
	\end{enumerate}
	\begin{figure}
		\begin{center}
			\includegraphics{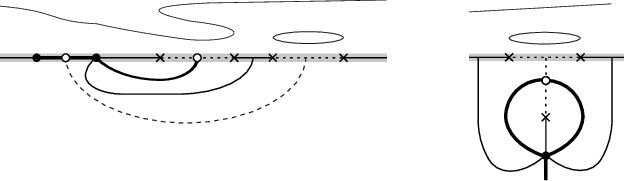}\\
		\end{center}
		\caption{Removing/creating a pair of neighboring jump and zigzag}\label{JZ}
	\end{figure}
	\begin{figure}
		\begin{center}
			\includegraphics{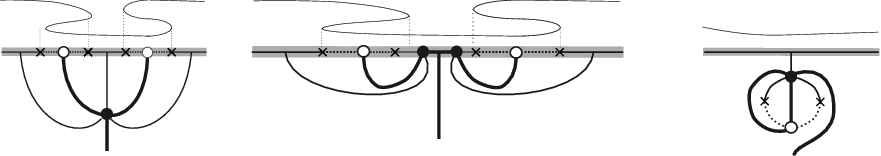}\\
		\end{center}
		\caption{Removing/creating a pair of neighboring zigzags}\label{ZZ}
	\end{figure}
	\subsection{Singular Trigonal Curves}
	A non-isotrivial trigonal curve is called \emph{nodal-cuspidal} if all roots of its discriminant $d(x)$ have multiplicity at most three.
	
	It is easy to verify that \cite[Theorem 3]{Z21} extends to real nodal-cuspidal curves without imaginary singularities:
	\begin{theorem} \label{s.max.inflected}
		Any real nodal-cuspidal non-hyperbolic curve without imaginary singularities is rigidly isotopic to a maximally inflected one.
	\end{theorem}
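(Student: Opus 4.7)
The plan is to reduce Theorem~\ref{s.max.inflected} to the nonsingular case at the level of dessins. Let $C$ be a real nodal-cuspidal non-hyperbolic trigonal curve without imaginary singularities, and let $\Gamma_C\subset D$ be its dessin. The hypothesis on singularities says that every singular \cross--vertex of $\Gamma_C$ lies on $\partial D$; the non-singular \cross--vertices form a finite set consisting of real points and complex conjugate pairs of interior points. My goal is to show that $\Gamma_C$ is weakly equivalent to an \emph{unramified} dessin (every \cross--vertex real), which by \cite[Corollary~4.13]{Degt} is the dessin of a maximally inflected nodal-cuspidal curve rigidly isotopic to $C$.

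I would adapt, almost verbatim, the reduction algorithm used in the proof of \cite[Theorem~3]{Z21} (whose analytic input is \cite[Propositions~5.5.3 and 5.6.4]{DIK08}). That algorithm inducts on the number of complex conjugate pairs of non-singular \cross--vertices. Each inductive step pulls one such pair onto $\partial D$ by a combination of monochrome modifications, bridge creations, \white--in/out, \black--in/out, and a final \cross--in, preceded when necessary by straightening or creating a zigzag according to Figure~\ref{fig.zigzag}. The singular \cross--vertices of $\Gamma_C$ are treated as fixed marked real points: in a small neighborhood of each of them the local combinatorics of the dessin is rigidly determined by the singularity type (node or cusp), so these neighborhoods survive every move untouched.

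The main obstacle I anticipate is verifying locality. At each inductive step one must find a subdisk $D'\subset D$ that is disjoint from the singular \cross--vertices, contains the chosen complex conjugate pair, and supports the required sequence of moves without creating a directed monochrome cycle. Since the singular vertices are finite in number and each controls only a compact portion of $\Gamma_C$, such a $D'$ always exists; and the no-monochrome-cycle condition is controlled exactly as in the nonsingular setting, because outside those fixed neighborhoods the dessin is combinatorially indistinguishable from that of a nonsingular curve.

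To conclude, once every non-singular \cross--vertex has been brought to $\partial D$, the resulting dessin is unramified. To lift this weak equivalence of dessins back to a rigid isotopy of curves, it suffices to extend Proposition~\ref{equiv.zigzag} to real nodal-cuspidal curves without imaginary singularities; this is routine, since the Kodaira--Spencer deformation argument of \cite{DIK08} realizing each elementary move can be carried out equivariantly while preserving the real nodes and cusps of $C$ as marked singular points.
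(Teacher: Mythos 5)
Your proposal is correct and follows essentially the same route the paper intends: the paper offers no argument beyond the assertion that \cite[Theorem 3]{Z21} ``extends'' to nodal-cuspidal curves without imaginary singularities, and your sketch --- running the dessin-level induction that pulls conjugate pairs of non-singular \cross--vertices to $\partial D$ while keeping the real singular \cross--vertices fixed --- is exactly the verification being alluded to. If anything, you supply more detail (the locality issue and the lift back to a rigid isotopy) than the paper does.
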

		
		Real singular \cross-vertices of full valency 4 that are adjacent to solid edges, i.e., corresponding to isolated non-degenerate double (\emph{solitary}) points of the curve ("degenerate ovals"),
	we will call \textit{solitary} and include them among the maximal dotted segments.
	
	\begin{figure}
		\begin{center}
			\scalebox{1}{\includegraphics{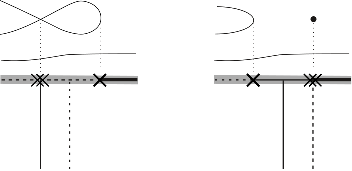}}\\
		\end{center}
		\caption{Passing a nodal point through a cusp}\label{isol}
	\end{figure}
	\begin{proposition}\label{s-oval}
		On a real segment without \white--vertices, a solitary vertex can be swapped with an oval.
	\end{proposition}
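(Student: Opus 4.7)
The statement is a local claim on $\partial D$. Let $s$ denote the solitary \cross-vertex and $c_1,c_2$ the \cross-endpoints of the oval $o$, with $c_1$ the one closer to $s$ along the given segment. Since the segment contains no \white-vertices, the real arc from $s$ to $c_1$ consists only of solid edges joined at solid monochrome vertices, if any, and the dotted arc of $o$ between $c_1$ and $c_2$ contains only dotted monochrome vertices, if any. Using the monochrome modifications and bridge destructions from Section~\ref{graphmodif} I would first push all of these monochrome vertices out of a small neighborhood $U$ of the edge between $s$ and $c_1$ in which the swap will be carried out. It then suffices to work inside the resulting normalized local picture in $U$, where $s$, $c_1$, $c_2$ are the only essential vertices, a single solid real edge joins $s$ and $c_1$, and a single dotted real edge joins $c_1$ and $c_2$.

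Next, I would perform the swap of $s$ past $c_1$ via the move shown in Figure~\ref{isol}, read as a short composition of elementary moves. Intuitively, $s$ and $c_1$ are driven together so that they coalesce into a single \cross-vertex of full valency~$6$ — the ``cusp'' of the caption, corresponding to the collision of the isolated double point of the underlying curve with a smooth vertical tangent — and are then re-split into the same two \cross-vertices in the opposite cyclic order along $\partial D$. Combinatorially, this is a \cross-out applied near $c_1$, followed, after a short isotopy inside $U$, by a \cross-in applied on the opposite side of $s$. The colors of the resulting real edges are forced by the local model: the re-emerged $s$ is again flanked by two solid real edges (so it is still a solitary vertex) and the re-emerged $c_1$ sits between a solid real edge and the dotted edge of $o$ (so $o$ is still an oval with the same endpoints $c_1,c_2$). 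Applying the mirror version of the same move at $c_2$ afterwards pushes $s$ past the second endpoint of $o$, placing it on the far side of the oval.

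The main point to verify is that each intermediate graph is actually a dessin — in particular, that the coalesce-and-split does not create a directed monochrome cycle — and that the post-split coloring is the one described above. Both follow from a direct inspection of the local model of Figure~\ref{isol}: the region $U$ was cleared of monochrome vertices, so no monochrome loop can possibly appear inside it during the move, and the $j$-invariant is traced symmetrically through the degeneration, so the post-swap picture is literally the mirror image of the pre-swap picture with $s$ and $c_1$ interchanged. Since every step used is an elementary move of Section~\ref{graphmodif}, the original and the swapped dessins are equivalent, which by the dictionary between dessins and real trigonal curves (Proposition~\ref{equiv.zigzag} and its extension to the nodal-cuspidal case, Theorem~\ref{s.max.inflected}) yields the claimed swap on the side of curves.
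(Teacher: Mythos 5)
Your overall strategy (clear the arc between the solitary vertex and the oval, then push the solitary vertex through the oval's endpoints via the move of Figure~\ref{isol}) is the same as the paper's, but your normalization step contains a false claim that leaves a genuine gap. You assert that, since the segment has no \white--vertices, the real arc from $s$ to $c_1$ ``consists only of solid edges joined at solid monochrome vertices, if any.'' This does not follow: a real segment free of \white--vertices may still carry \black--vertices, each real \black--vertex being adjacent to one solid and one bold real edge. What the absence of \white--vertices actually forces is that such \black--vertices occur in \emph{pairs}, joined by bold real edges with no \white--vertices on them; the paper's proof consists precisely of observing this and removing each such pair by \black-in before invoking Figure~\ref{isol}. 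Your argument never accounts for these vertices, and monochrome modifications and bridge destructions alone cannot dispose of them, so the ``normalized local picture'' you work in is not justified.

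A secondary, smaller point: the move of Figure~\ref{isol} is not literally a composition of \cross-out and \cross-in in the sense of Figure~\ref{fig.moves}(g),(h) (those moves exchange a real \cross--vertex with an inner one; they do not merge two real \cross--vertices into one of full valency~$6$). The figure encodes a rigid isotopy inside the wall $\Delta_1$ in which the node and the vertical tangent collide into a cusp and re-emerge on the other side; it should be cited as such rather than re-derived from the elementary moves. With the \black--vertex pairs removed first and Figure~\ref{isol} invoked directly, your argument becomes the paper's.
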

	\begin{proof}
		On such a segment, all \black--vertices occur in pairs, which can be removed from the segment using \black-in. After this, the transformation of Figure \ref{isol} swaps the solitary vertex and the oval.
	\end{proof}
	
	\subsection{Cuts}\label{cuts}
	
	For $j=1,2$, let $D_j$ be a disk, $\Gamma_j\subset D_j$ be a dessin of a nodal-cuspidal curve, $I_j\subset\partial D_j$ be a segment, and
	$\varphi:I_1\to I_2$ be an isomorphism, that is, a diffeomorphism of the segments
	establishing an isomorphism of the graphs
	$\Gamma_1\cap I_1\to\Gamma_2\cap I_2$.
	Consider the quotient set $D_{\varphi}=(D_1\sqcup D_2)/\{x\sim\varphi(x)\}$ and the image
	$\Gamma'_{\varphi}\subset D_{\varphi}$ of~$\Gamma_1\cup\Gamma_2$. Denote by~$\Gamma_{\varphi}$
	the dessin obtained from~$\Gamma'_{\varphi}$ by removing the image of the segment
	~$I_1$ if $\varphi$ changes the orientation, and if it does not, then either by transforming
	the images of the endpoints of~$I_1$ into monochrome vertices or by preserving these endpoints as essential vertices.
	
	In what follows, we always assume that $I_j$ is a part of an edge of ~$\Gamma_j$ (see Figure \ref{ArtCut}), or
	$I_j$ contains one
	\white--vertex, or ends at singular vertices and contains one monochrome vertex (see Figure \ref{GenCut}).
	Up to isotopy, in the second and third cases the isomorphism $\varphi$
	is unique; in the first case, this requires specifying whether $\varphi$ preserves or reverses the orientation.
	If $\Gamma_{\varphi}$ is a  dessin, it is called the result of
	 \emph{gluing} of $\Gamma_1$, $\Gamma_2$ along~$\varphi$.
	The image of ~$I_1$ is called a \emph{cut}
	in~$\Gamma_{\varphi}$.
	A cut
	is called \emph{genuine} (\emph{artificial}) if $\varphi$ preserves
	(respectively, reverses) the orientation; it is called \rm{solid},
	\rm{dotted}, or \rm{bold} depending on the structure of the segment
	$\Gamma\cap I_1$.
	(The terms \rm{dotted} and \rm{bold} are still applied
	to cuts containing a
	\white--vertex.)
	A \emph{junction} is a genuine cut obtained by gluing two dessins
	along isomorphic
	parts of their zigzags (see Figure  \ref{GenCut} left).
	\begin{figure}[ht]
		\begin{center}
			\scalebox{1.3}{\includegraphics{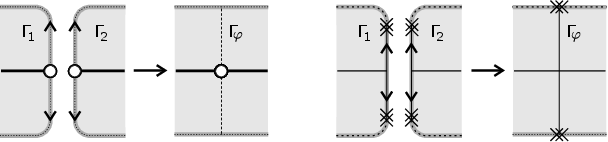}}\\
		\end{center}
		\caption{Examples of genuine gluing}
		\label{GenCut}
	\end{figure}
	
	\begin{figure}[ht]
		\begin{center}
			\scalebox{0.9}{\includegraphics{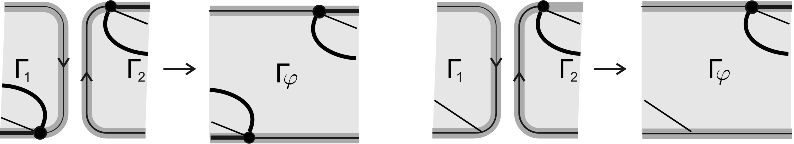}}\\
		\end{center}
		\caption{Examples of artificial gluing}
		\label{ArtCut}
	\end{figure}
	\section{Skeletons}\label{S.skeletons}
	Unramified dessins can be reduced
	to simpler objects, so-called skeletons,
	which are obtained by ignoring all edges except the dotted ones.
	
	Below, the concept of a skeleton, introduced in \cite{Z21} for maximally inflected trigonal curves, is extended to the case of the dessin of a curve obtained from a maximally inflected one by the transformations of Figures \ref{JZ} and \ref{ZZ}. As a result, inner non-isolated white and cross vertices are added to the skeleton
 (see Definition \ref{def.skeleton}).
	
	\subsection{Abstract Skeletons}\label{a.skeletons}
	Consider a (finite) graph $\Sk \subset D$ embedded in a
	disk $D$. We do not exclude the possibility that
	some vertices of~$\Sk$ belong to the boundary of~$D$;
	such vertices are called \emph{real}, the rest are called \emph{imaginary} or \emph{inner}. The set of edges adjacent to each real (respectively, inner)
	vertex~$v$ of~$\Sk$ receives from $D$ a pair of opposite linear
	(respectively, cyclic)
	orders.
	The \emph{immediate neighbors} of an edge~$e$ at~$v$ are the immediate
	predecessor and successor of this edge with respect to (any) of these orders.
	A \emph{first-neighbor path} in~$\Sk$ is a sequence
	of directed
	edges of~$\Sk$ such that
	each edge is followed by one of
	its immediate neighbors.
	
	Below, we consider graphs with
	edges of two types: directed and undirected. We call
	such graphs \emph{partially directed}.
	The directed and undirected parts (the unions of corresponding edges and adjacent vertices) of a partially directed
	graph $\Sk$ are denoted by $\Skdir$ and $\Skud$, respectively.
	
	\begin{definition}\label{def.a.skeleton}
		{\rm Let $D$ be a disk.
		\emph{Abstract skeleton} is a partially directed
		embedded graph $\Sk \subset D$ that is disjoint
		from $\partial D$ except for some
		vertices,
		and satisfies the following conditions:
		\newcounter{N5}
		\begin{list}{(\arabic{N5})}{\usecounter{N5}}
			\item\label{Sk.1}
			each vertex is \emph{white}, \emph{black}, or \emph{cross}; the valency of any inner white vertex is two, any inner black vertex is isolated, any cross vertex is inner monovalent and connected by an incoming edge outgoing from an inner white vertex;
			any edge adjacent to a real black vertex (called the \emph{source})
			is outgoing;
			both edges adjacent to an inner white vertex are outgoing;
			\item \label {Sk.2}
			any immediate neighbor of an incoming
			edge is an outgoing one;
			\item \label {Sk.3}
			$\Sk$ has no first-neighbor cycles;
			\item \label {Sk.4}
			the set of real vertices of the graph ~$\Sk$ is nonempty;
			\item\label{Sk.5}
			for any open region~$R$, i.e. a component of  $D\smallsetminus E_{\Sk}$ with $E_{\Sk}$ being the union of the closures of all edges in $\Sk$, $b_1+3b=v+z+i$, 
			where $b_1$ is the number of black vertices lying on the boundary $\mathrm{Fr}R$ of the region $R$, each of which is incident with one outgoing edge lying on $\mathrm{Fr}R$, $b$ is the number of isolated (real or inner) black vertices in $R$, $v$ is the number of black vertices on $\mathrm{Fr}R$, each of which is incident with two outgoing edges lying on $\mathrm{Fr}R$, $ z $ is the number of connected components of the set $\Skud\cap\,\mathrm{Fr}R$, $i$ is the number of inner white vertices on $\mathrm{Fr}R$  (such a vertex is counted twice if the edges adjacent to it on $\mathrm{Fr}R$ are internal, i.e. $R$ is adjacent to them on both sides).
		\end{list}
		
		If, in addition,
		\newcounter{N6}
		\begin{list}{(\arabic{N6})}{\usecounter{N6}}
			\addtocounter{N6}{\value{N5}}
			\item\label{Sk.8}
			$ \Skdir\cap\Skud=\varnothing $;
			\item\label{Sk.7} each real vertex has no
			directed outgoing edges that are immediate neighbors;
			\item\label{Sk.9} each real white vertex $\Skdir$ has odd valency and is a \emph{sink}, which means that the number of its adjacent incoming edges is one greater than the number of outgoing edges;
			\item\label{Sk.10} each black vertex is real and
			monovalent (i.e., it is a source);
			\item\label{Sk.11}
			the vertices of the subgraphs~$\Skdir$ and $\Skud$ alternate along~$\partial D$,
		\end{list}
		\par\removelastskip
		then $ \Sk $ is called a  \emph{skeleton of type~$I$}.}
	\end{definition}
\subsection{Equivalence of Abstract Skeletons}\label{equivalence}
Two abstract skeletons
are called \emph{equivalent} if, up to a homeomorphism $f:D\rightarrow D$,
they can be connected by the following \emph{elementary moves},
cf.Subsection~\ref{graphmodif}:
\begin{itemize}
	\item[--] \emph{elementary modification}, see Figure \ref{fig.Sk} (a);
	\item[--] \emph{creating} (\emph{destroying})  a \emph{bridge},
	see Figure \ref{fig.Sk} (b);
	the vertex shown in the figure is white;
	other edges of~$\Sk$ may also adjoin the vertex;
	\item[--] \emph{creating} (\emph{deleting})  an \emph{undirected edge},
	see Figure \ref{fig.Sk} (c); the vertex shown in the figure is black and real, the edges adjacent to it are immediate neighbors, and other directed outgoing edges may also adjoin it; the edge on the right side of the figure is undirected;
	\item[--] \emph{\black-in} and its inverse \emph{\black-out}, see Figure
	\ref{fig.Sk} (d), (e); all vertices shown in these figures
	are black, in Figure (d) there may be other directed outgoing edges adjacent to real vertices;
	\item[--] \emph{deleting/creating a neighboring jump and zigzag},
	see Figure \ref{fig.Sk} (f);
	\item[--] \emph{deleting/creating a pair of neighboring zigzags},
	see Figure \ref{fig.Sk} (g);
	\item[--] \emph{transforming a pair of directed edges into an undirected edge and the inverse move},
	see Figure \ref{fig.Sk} (h);
	\item[--] \emph{\cross-in} and its inverse \emph{\cross-out}, see Figure
	\ref{fig.Sk} (i). 
	
\end{itemize}
\begin{figure}[tb]
	\begin{center}
		\includegraphics{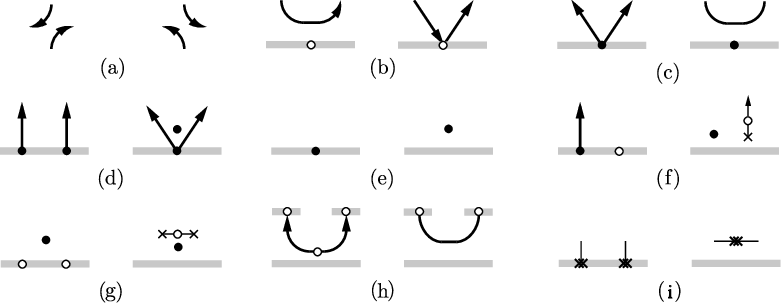}\\
	\end{center}
	\caption{Skeleton moves}\label{fig.Sk}
\end{figure}
(A move is valid only if the result is again an
abstract skeleton.)

An equivalence of two abstract skeletons on the same disk
with the same set of vertices is called \emph{restricted}
if  $f={\rm id}$ and
the above isotopies can be chosen identical on the vertices.
\subsection{Dotted Skeletons}\label{s.skeletons}
Intuitively, a dotted skeleton (see Definition \ref{def.skeleton}) is obtained from
a dessin $\Gamma$ by disregarding all but dotted edges,
patching the latter through all real \white--vertices, and adding inner \black--, \white--, and \cross--vertices.
The undirected edges of the skeleton correspond to the junctions of  $\Gamma$.

\begin{definition}\label{def.skeleton}
	{\rm Let $\Gamma \subset D$ be an unramified dessin or a dessin obtained from an unramified dessin by removing some pairs consisting of neighboring jumps and zigzags, or pairs of neighboring zigzags (see Figures \ref{JZ}, \ref{ZZ}).
	Let $\bar{D}$ be the
	disk obtained from~$ D$ by contracting
	each maximal dotted/bold segment to a point.
	
	The (\emph{dotted}) \emph{skeleton} of~$\Gamma$
	is a partially directed graph
	$\Sk=\Sk_\Gamma\subset \bar{D}$
	obtained from~$\Gamma$ as follows:
	\begin{itemize}
		\item[--]
		each maximal dotted/bold segment is contracted to a point, which is declared a white/black vertex of the skeleton~$\Sk$;
		\item[--] each inner \black--/\cross--vertex of dessin~$\Gamma$ is replaced by an inner black/cross vertex of~$\Sk$;
		\item[--] each inner \white--vertex of~$\Gamma$ that is not on a junction is replaced by an inner white vertex of~$\Sk$, and the dotted edges adjacent to it
		are replaced by directed skeleton edges leading from this white vertex to a cross or white real vertex;
		\item [--]
		each junction is replaced by an undirected skeleton edge, and each of the inner dotted edges of the dessin $\Gamma$ not mentioned above is replaced by a directed skeleton edge with the orientation obtained from the orientation of the dotted edge;
		\item[--] $\Skdir$ ($\Skud$) is the union of black (respectively, white) isolated vertices and the closures of directed (respectively, undirected) edges of~$\Sk$ (so that $\Skdir$ and $\Skud$ may intersect at real vertices).
	\end{itemize}}
	
\end{definition}
The following assertions are proved in the same way as the assertions  \cite[Propositions 5-7, Theorem 2]{Z21} and allow us to talk about the application of transformations of Subsection \ref{equivalence} to dessins.
\begin{proposition} \label{prop.Sk=Sk}
	The skeleton~$\Sk$ of the dessin~$\Gamma$
	from Definition~\ref{def.skeleton}
	is an abstract skeleton in the sense of
	Definition~\ref{def.a.skeleton}. \end{proposition}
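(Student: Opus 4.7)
The plan is to verify the five defining conditions of Definition \ref{def.a.skeleton} for the graph $\Sk=\Sk_\Gamma$ produced by Definition \ref{def.skeleton}. Conditions (1)--(4) are local bookkeeping that follows from the construction together with the corresponding properties of the dessin $\Gamma$; the genuine content lies in the region identity (5), which extends \cite[Proposition 5]{Z21} to the wider class of dessins allowed here.

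For (1) I would walk through the construction. Maximal dotted segments contract to white vertices of $\Sk$, maximal bold segments contract to black vertices, and the surviving inner essential vertices of $\Gamma$ become inner cross, white or black vertices of $\Sk$. The valency assertions are then read off the local picture of a dessin: around an inner \black--vertex three bold edges meet and no dotted ones, so after contraction we get an isolated inner black vertex; an inner \white--vertex that does not lie on a junction is adjacent to exactly two dotted edges, both pointing away by the chosen orientation of the arc $(0,1)$, which yields a bivalent inner white skeleton vertex with both edges outgoing; an inner non--singular \cross--vertex is adjacent to a single dotted edge entering from the neighbouring $(1,\infty)$ arc, so it becomes the monovalent cross vertex joined by an incoming edge from an inner white vertex, as required. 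The assertions at real \black-- and \white--vertices follow from the same local analysis, and the source condition for a real black vertex simply records the fact that at a contracted bold segment meeting $\partial D$ every adjacent dotted edge leaves that segment.

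For (2) the colour alternation in a dessin implies that two dotted edges meeting at a contracted bold segment (or at a real \white--vertex) carry opposite orientations with respect to the induced orientation, so the immediate neighbour of an incoming skeleton edge is always outgoing. For (3) a first--neighbour cycle in $\Sk$ lifts to a sequence of dotted edges of $\Gamma$ joined through contracted bold segments; following the coherent orientation along each segment produces a directed monochrome cycle in $\Gamma$, which is forbidden by the definition of a dessin. Condition (4) is automatic, since every dessin has real essential vertices.

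The main obstacle is the region identity (5), $b_1+3b=v+z+i$. My plan is to lift a region $R$ of $\Sk$ back through the contraction $D\to\bar D$, identify the sub--dessin of $\Gamma$ whose underlying disk maps onto $\overline R$, and count the triangular regions of this sub--dessin in two independent ways. The coefficients in (5) are exactly the local triangle--counts contributed by each piece of boundary data: an isolated inner black vertex contributes three triangles (one for each bold edge leaving the corresponding \black--vertex of $\Gamma$), a source black vertex on $\mathrm{Fr}R$ contributes one, a $v$--type vertex contributes two, each component of $\Skud\cap\mathrm{Fr}R$ contributes one triangle through the junction, and an inner white vertex on $\mathrm{Fr}R$ contributes one or two triangles depending on whether $R$ meets it from a single side or from both. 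For a maximally inflected dessin this is exactly the triangle count of \cite[Proposition 5]{Z21}. The new input is that $\Gamma$ may have been modified by the $JZ$-- and $ZZ$--moves of Figures \ref{JZ} and \ref{ZZ}; I would finish with a direct local inspection showing that each such move changes the two sides of (5) by the same amount, so the identity propagates from the maximally inflected case to the full class of dessins admitted by Definition \ref{def.skeleton}.
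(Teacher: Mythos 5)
Your proposal takes essentially the same route as the paper, which offers no argument beyond the remark that this proposition (together with Propositions~\ref{Sk.extension}, \ref{prop.Sk} and Theorem~\ref{cor.Sk}) is ``proved in the same way as \cite[Propositions 5--7, Theorem 2]{Z21}'': verifying conditions (\ref{Sk.1})--(\ref{Sk.4}) by local inspection of the contraction and extending the region identity (\ref{Sk.5}) of \cite[Proposition 5]{Z21} by checking its invariance under the moves of Figures~\ref{JZ} and~\ref{ZZ} is exactly that adaptation. One small bookkeeping point to recheck: you assign local contribution $2$ to the $v$-type black vertices, whereas they enter (\ref{Sk.5}) with coefficient $1$.
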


\begin{proposition} \label{Sk.extension}
	Any abstract skeleton $\Sk$ is a
	skeleton
	of some dessin~$\Gamma$
	in the sense of Definition~\ref{def.skeleton};
	any two such dessins can be connected by a sequence of
	isotopies and elementary moves,
	see Subsection~\ref{graphmodif},
	that preserve the skeleton.
\end{proposition}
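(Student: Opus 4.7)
The plan is to follow the inductive scheme of \cite[Propositions 5--7, Theorem 2]{Z21}, adapted to accommodate the new inner \white-- and \cross--vertices that appear because the skeletons here may come from dessins obtained from unramified ones by the transformations of Figures \ref{JZ} and \ref{ZZ}. The two assertions of the proposition are handled in parallel: first, I construct a dessin realizing any given abstract skeleton, and then I show that two such reconstructions differ only by elementary moves that fix the skeleton.

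For existence, I would reverse the contraction of Definition \ref{def.skeleton}. Working in the original disk~$D$ (the one obtained from $\bar D$ by inflating the contracted points back into segments), I expand each real white vertex of $\Sk$ into a maximal dotted segment and each real black vertex into a maximal bold segment; the number of interior \white--vertices (even versus odd) is determined by whether the segment was an oval/wave or a zigzag/jump, as recorded by axiom (Sk.5). The directed edges of $\Sk$ become inner dotted edges of $\Gamma$ with the induced orientation, and each undirected edge (coming from a junction) is resolved by inserting a pair of inner dotted edges meeting at an inner \white--vertex. Inner white, black, and cross vertices of $\Sk$ lift to inner \white--, \black--, \cross--vertices of $\Gamma$; the remaining solid edges and monochrome vertices required to turn $\Gamma$ into a trichotomic graph are then filled in region by region using the prescription of \cite[Chapter~3]{Degt}. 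The axiom (Sk.5), rewritten per region, is exactly the balance that makes this filling possible.

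For the second assertion, let $\Gamma_1,\Gamma_2$ be two dessins with $\Sk_{\Gamma_1}=\Sk_{\Gamma_2}=\Sk$. On each contracted dotted/bold segment they can differ only in the number and cyclic position of the monochrome vertices and of the cancelling pairs of essential vertices lying strictly inside that segment; in the interior of each region they can differ only by the presence of bridges and of cancellable essential vertices. All such discrepancies are removed by finite sequences of the elementary moves of Subsection \ref{graphmodif} --- monochrome modification, bridge creation/destruction, and the three in/out moves --- none of which changes the skeleton obtained under the contraction of Definition \ref{def.skeleton}, so the whole sequence is skeleton-preserving.

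The main obstacle is the existence step, and within it the verification that axiom (Sk.5) is equivalent to the local degree condition at every region, taking into account the new inner \white-- and \cross--vertices. The term $i$ counted with multiplicity two when $R$ is adjacent to both sides of a pair of edges at an inner white vertex, and the constraint that inner cross vertices be monovalent and fed by an inner white vertex, must both be matched against the corresponding contributions to the valency of \cross--vertices of $\Gamma$ arising from the transformations of Figures \ref{JZ} and \ref{ZZ}. Once this regional balance is checked, both the construction of $\Gamma$ and the skeleton-preserving equivalence of any two such $\Gamma$'s follow by the same induction on the number of vertices and regions as in \cite[Propositions 5--7]{Z21}.
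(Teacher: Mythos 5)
Your proposal follows the same route as the paper: the paper gives no independent argument for Proposition~\ref{Sk.extension}, stating only that it "is proved in the same way as \cite[Propositions 5--7, Theorem 2]{Z21}", and your sketch is a reasonable expansion of precisely that adaptation (reconstructing the dessin by un-contracting segments and resolving junctions, then reducing any two reconstructions to one another by skeleton-preserving elementary moves). The only small imprecision is attributing the oval/zigzag parity on a reconstructed segment to axiom~(5) of Definition~\ref{def.a.skeleton}; it is in fact read off from the valency of the corresponding real white vertex, since each real \white--vertex of the dessin contributes one inner dotted edge to the skeleton vertex.
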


\begin{proposition} \label{prop.Sk}
	Let skeletons $\Sk_1$ and~$\Sk_2$ with the same set of vertices be obtained from dessins $\Gamma_1, \Gamma_2 \subset D$ in accordance with Definition~\ref{def.skeleton}.
	Then $\Gamma_1$ and $\Gamma_2$
	are related by restricted equivalence
	if and only if so are the corresponding
	skeletons~$\Sk_1$ and~$\Sk_2$.
\end{proposition}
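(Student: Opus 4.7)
The plan is to prove the two implications by translating between the elementary moves of dessins (Subsection~\ref{graphmodif} together with the composite moves of Figures~\ref{JZ} and~\ref{ZZ}) and those of skeletons (Subsection~\ref{equivalence}). The argument follows the pattern of \cite[Proposition 7]{Z21}, with adjustments to accommodate the inner white and cross vertices that arise when the dessin is obtained from a maximally inflected one via the moves of Figures~\ref{JZ} and~\ref{ZZ}.

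For the forward implication, I would proceed by case analysis on the list of elementary dessin moves. A monochrome modification or bridge creation/destruction on solid or bold edges either does not change the skeleton at all or realizes move~(c) of Figure~\ref{fig.Sk}; the same operations on dotted edges yield moves~(a) and~(b). The \black-in/\black-out moves correspond directly to moves~(d), (e). A \white-in/\white-out move on dotted edges either merges two maximal dotted segments into one (or splits one into two), which is a skeleton bridge move~(b), or, when performed across a junction, realizes transformation~(h) between a pair of directed edges and an undirected edge. The \cross-in/\cross-out moves translate to~(i), and the composite moves of Figures~\ref{JZ} and~\ref{ZZ} translate by construction to the skeleton moves~(f) and~(g). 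Since the dessin equivalence is restricted, the accompanying isotopies can be chosen identical on all vertices, so the induced skeleton equivalence is restricted as well.

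For the reverse implication, I would lift a sequence of skeleton moves stepwise. Given a chain $\Sk_1=\Sigma_0\to\Sigma_1\to\dots\to\Sigma_n=\Sk_2$ of elementary moves and restricted isotopies, Proposition~\ref{Sk.extension} produces dessins $\Delta_i$ with $\Sk_{\Delta_i}=\Sigma_i$, any two such choices being related by dessin elementary moves that fix the skeleton. Thus one may arrange $\Delta_0=\Gamma_1$ and $\Delta_n=\Gamma_2$. For each $i$, pick a dessin realization of the skeleton move $\Sigma_{i-1}\to\Sigma_i$: each such move is the image under segment contraction of a specific dessin move (possibly preceded by auxiliary bridge creations or destructions inside bold or solid edges, which are invisible to the skeleton), following the dictionary set up in the forward direction. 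Concatenating these lifts with the intermediate skeleton-fixing adjustments provided by Proposition~\ref{Sk.extension} yields the required chain of elementary moves relating $\Gamma_1$ to $\Gamma_2$.

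The main obstacle will be the careful bookkeeping of how maximal dotted and bold segments recombine under dessin moves, especially at junctions and near the inner white or cross vertices introduced by the transformations of Figures~\ref{JZ} and~\ref{ZZ}. At every such location, one must check that the induced local change in $\Sk$ is exactly one of the moves of Subsection~\ref{equivalence}, with the correct combinatorial data (valency, directedness, immediate-neighbor structure and adjacency of undirected edges), and that conditions~(\ref{Sk.5}) and~(\ref{Sk.8})--(\ref{Sk.11}) of Definition~\ref{def.a.skeleton} are preserved. This verification is essentially diagrammatic and parallels \cite[Propositions~5--7]{Z21}, but it needs to be redone locally at each inner white or cross vertex that may now appear.
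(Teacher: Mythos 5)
Your proposal matches the paper's approach: the paper offers no explicit argument for Proposition~\ref{prop.Sk}, stating only that it is proved in the same way as \cite[Propositions 5--7, Theorem 2]{Z21}, and your move-by-move dictionary between the dessin moves of Subsection~\ref{graphmodif} (with Figures~\ref{JZ}, \ref{ZZ}) and the skeleton moves of Subsection~\ref{equivalence}, together with the lifting of skeleton moves via Proposition~\ref{Sk.extension}, is precisely that adaptation. The local verification you flag at the inner white and cross vertices is indeed the only genuinely new content relative to \cite{Z21}, and your handling of it is consistent with Definition~\ref{def.skeleton}.
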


\begin{theorem} \label{cor.Sk}
	There is a canonical
	bijection
	between the set of rigid isotopy classes
	of almost generic
	real trigonal curves and the set of equivalence classes
	of abstract skeletons.
\end{theorem}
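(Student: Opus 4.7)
The plan is to obtain the canonical bijection by composing two correspondences: rigid isotopy classes of almost generic curves $\leftrightarrow$ weak equivalence classes of dessins (essentially Proposition \ref{equiv.zigzag}), and weak equivalence classes of dessins $\leftrightarrow$ equivalence classes of abstract skeletons (packaged by Propositions \ref{prop.Sk=Sk}, \ref{Sk.extension}, and \ref{prop.Sk}). Since any almost generic curve can be perturbed to a generic one without changing its rigid isotopy class, Proposition \ref{equiv.zigzag} applies; since Theorem \ref{max.inflected} lets one isotope any non-hyperbolic curve to a maximally inflected representative, one may moreover assume the associated dessin is unramified, i.e., it fits the hypothesis of Definition \ref{def.skeleton}.

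The map $\Phi$ from rigid isotopy classes to equivalence classes of skeletons is defined as follows: given $C$, perturb it to be generic, rigidly isotope it to a maximally inflected curve, form the dessin $\Gamma_C$, and apply Definition \ref{def.skeleton} to obtain the skeleton $\Sk_C$, which is abstract by Proposition \ref{prop.Sk=Sk}. To verify that $\Phi$ is well defined on rigid isotopy classes, I would trace the effect on $\Sk_C$ of each move generating weak equivalence of dessins, namely the elementary moves of Subsection \ref{graphmodif}, the straightening/creating of a zigzag, and the JZ and ZZ reductions of Figures \ref{JZ}--\ref{ZZ}. The expected correspondence with the skeleton moves of Subsection \ref{equivalence} is: monochrome modification and bridge creation yield (a), (b); the \black--, \white--, \cross--in/out moves give (d), (e), (i); junction formation along a common zigzag portion converts pairs of directed edges into an undirected one, yielding (c) and (h); the JZ and ZZ moves translate into (f) and (g). This step is a direct extension of \cite[Propositions 5--7, Theorem 2]{Z21} to the enlarged class of dessins permitted in Definition \ref{def.skeleton}.

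For surjectivity of $\Phi$, Proposition \ref{Sk.extension} produces a dessin $\Gamma$ with prescribed skeleton $\Sk$, and by \cite[Corollary 4.13]{Degt} (recalled in Subsection \ref{graph}) this dessin is realized by an actual real trigonal curve. For injectivity, given $\Sk_{C_1}\sim\Sk_{C_2}$, I would lift the chain of skeleton moves to a chain of dessin moves: Proposition \ref{prop.Sk} lifts each individual restricted-equivalence step, while Proposition \ref{Sk.extension} is used in between to modify a dessin without affecting its skeleton, producing a weak-equivalence chain from $\Gamma_{C_1}$ to $\Gamma_{C_2}$; then Proposition \ref{equiv.zigzag} gives the desired rigid isotopy.

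The main obstacle is the well-definedness step: one has to check move-by-move that every dessin transformation descends to a (possibly empty) sequence of the skeleton moves listed in Subsection \ref{equivalence}, and the verification becomes delicate precisely at the inner \white--, \black--, and \cross--vertices introduced here, which are absent from the setting of \cite{Z21}. Once this bookkeeping is carried out for each move in both directions, the theorem follows by a purely formal composition of Propositions \ref{equiv.zigzag}, \ref{prop.Sk=Sk}, \ref{Sk.extension}, and \ref{prop.Sk}.
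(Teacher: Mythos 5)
Your proposal matches the paper's intended argument: the paper gives no explicit proof of Theorem \ref{cor.Sk}, stating only that it (together with Propositions \ref{prop.Sk=Sk}--\ref{prop.Sk}) is ``proved in the same way as'' \cite[Propositions 5--7, Theorem 2]{Z21}, and your composition of the curve--dessin correspondence (Proposition \ref{equiv.zigzag}, Theorem \ref{max.inflected}) with the dessin--skeleton correspondence (Propositions \ref{prop.Sk=Sk}, \ref{Sk.extension}, \ref{prop.Sk}) is exactly that route. The move-by-move bookkeeping you flag as the main obstacle is precisely what the paper delegates to the extension of \cite{Z21}, so your outline is consistent with, and no less complete than, the paper's own treatment.
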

\section{A Constructive Description of Maximally Inflected Trigonal Curves}\label{S.rational}
This section gives a constructive description
of the real parts of nonsingular maximally inflected
trigonal curves.

\subsection{Blocks}\label{blocks}
Consider a class of unramified dessins defined constructively according to the following definition.
\begin{definition}\label{def.block}
	A cubic block of type I \emph{ is an unramified dessin of degree 3 of type I (see Figure \ref{cubics}~I).} A cubic block of type II \emph{  is an unramified dessin of degree 3 of type II with an inner \black--vertex (see Figure \ref{cubics} II). Several cubic blocks, artificially glued together along segments of solid edges, form a (}general\emph{)} block.
	
	A block type \emph{is the type of the curve corresponding to the block.}
	
\end{definition}
\begin{figure}[h]
	\begin{center}
		\includegraphics{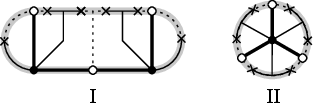}\\
	\end{center}
	\caption{Cubic blocks}\label{cubics}
\end{figure}

According to \cite[5.6.7]{DIK08} and \cite[5.1]{Z21}, both cubic blocks are unique up to isomorphism.

The following proposition describes the blocks.
\begin{proposition}\label{block.existence} {\rm \cite[Proposition 8]{Z21}.} 
	Let $d\ge1$ be an integer, and let $O,J\subset S^1=\partial D$ be two disjoint $d$-element sets. Then there exists a unique, up to
	restricted equivalence, block $\Gamma\subset D$ of type I and degree~$3d$
	with an oval about each
	point of~$O$, a jump at each point of~$J$, and a zigzag between
	any two points of $O\cup J$ \emph{(}and no other dotted or bold segments\emph{)}.
	
	A block of degree~$3d$ of arbitrary type with $ c $ jumps, $ c $ ovals, $ b $ inner \black--vertices, and $ z $ zigzags corresponds to an abstract skeleton in the disk with $ c $ oriented disjoint
	chords, $ b $ inner black vertices, and $ z $ real isolated white vertices that satisfy the following conditions:
	\begin{enumerate}
		\item \label{11.1}
		$b+c=d$, $z+c=3d$;
		\item\label{11.2}
		for each component $ R_i $ of the closed cut  of the disk along the chord, $z_i=c_i+3b_i$, where $c_i$, $b_i$, and $z_i$ are the numbers of chords, black inner vertices
		and real isolated vertices of this component.
	\end{enumerate}
\end{proposition}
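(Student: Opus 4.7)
The plan is to prove both parts of the proposition by combining the uniqueness of the two cubic blocks (cited immediately above the proposition) with the constructive definition of a general block as an artificial gluing of cubic blocks along solid segments (Definition \ref{def.block}), and then translating the combinatorics of a block into that of its dotted skeleton (Definition \ref{def.skeleton}).

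For the first statement, existence is established constructively: taking $d$ copies of the unique type~I cubic block, each of which carries one oval, one jump, and zigzags, I amalgamate them via artificial solid cuts so that the final block has an oval at each prescribed point of $O$, a jump at each prescribed point of $J$, and a zigzag between any two consecutive points of $O\cup J$ on $\partial D$. Uniqueness proceeds by cutting any two candidate blocks $\Gamma_1,\Gamma_2$ along all their artificial solid cuts, identifying the resulting pieces as type~I cubic blocks (hence unique up to isomorphism by the cited fact), and observing that the pattern of artificial cuts is determined up to isotopy by the arrangement of $O\cup J$ on $\partial D$. An induction on $d$ then assembles these piecewise isomorphisms into a restricted equivalence between $\Gamma_1$ and $\Gamma_2$.

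For the second statement, I pass from an arbitrary-type block $\Gamma$ to its dotted skeleton $\Sk_\Gamma$. Under the contraction of maximal dotted and bold segments, each oval becomes a real white sink, each jump becomes a real black source, and the inner dotted edge joining these two inside the corresponding type~I cubic sub-block becomes an oriented chord of $\Sk_\Gamma$. This accounts for the $c$ pairwise disjoint oriented chords. The $b$ inner black vertices of $\Sk_\Gamma$ are inherited from the $b$ type~II cubic sub-blocks of $\Gamma$, and the $z$ zigzags contract to $z$ real isolated white vertices. The global identity $b+c=d$ counts the cubic sub-blocks of $\Gamma$, while $z+c=3d$ counts the essential dotted segments on the boundary contributed by these sub-blocks. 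The local equation $z_i=c_i+3b_i$ in each chord-region $R_i$ is the local form of axiom \emph{(Sk.5)} in Definition \ref{def.a.skeleton}: summing the contributions of the type~I sub-blocks in $R_i$ (each contributing one chord and a fixed number of isolated white vertices on the $R_i$-side of the chord) and the type~II sub-blocks in $R_i$ (each contributing one inner black vertex and three isolated white vertices) produces the stated identity.

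The main obstacle is the converse direction, namely showing that any abstract configuration with $c$ oriented disjoint chords, $b$ inner black vertices, and $z$ real isolated white vertices satisfying (1)–(2) actually arises as $\Sk_\Gamma$ for some block $\Gamma$ with the prescribed data. Condition (2) is designed precisely as a local obstruction: it guarantees that each region $R_i$ has exactly the combinatorial budget required to be decomposed into cubic building blocks of types~I and~II consistent with the orientations of the bounding chords. One then fills in these cubic sub-blocks explicitly, invokes Proposition \ref{Sk.extension} to lift the resulting abstract skeleton to an actual dessin, and applies Proposition \ref{prop.Sk} to upgrade equivalence of skeletons to restricted equivalence of blocks, closing the correspondence.
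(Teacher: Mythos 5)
First, a point of reference: the paper itself does not prove this proposition --- it is stated with the citation \cite[Proposition 8]{Z21} and imported as a known result, so there is no in-paper argument to measure yours against. Your reconstruction is in the right general spirit (decompose a block into its cubic constituents along the artificial solid cuts, then read off the skeleton data), and your identification of condition (2) with axiom (5) of Definition \ref{def.a.skeleton} correctly locates where that condition comes from.

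There is, however, a genuine gap in your uniqueness argument for the first part. You assert that ``the pattern of artificial cuts is determined up to isotopy by the arrangement of $O\cup J$ on $\partial D$.'' This is false. Take $d=2$ with the points in cyclic order $j_1,o_1,j_2,o_2$ and one zigzag on each of the four arcs. There are two distinct non-crossing pairings of jumps with ovals, namely $\{j_1o_1,\ j_2o_2\}$ and the nested pairing $\{j_1o_2,\ j_2o_1\}$; both satisfy condition (2) in every region (one checks $z_i=c_i$ throughout in each case), and both are realizable by a single artificial solid cut separating two type~I cubic sub-blocks, since each sub-block then carries one oval, one jump and two zigzags in the admissible cyclic order. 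So neither the boundary data nor condition (2) determines the decomposition into cubic blocks, hence not the chord pattern of the skeleton either; the entire content of the uniqueness claim is that these different decompositions yield restrictedly equivalent dessins, which must be established by exhibiting explicit elementary moves (elementary/monochrome modifications re-routing the chords) --- precisely the step your induction assumes away. A milder version of the same problem affects your converse direction in the second part: ``filling in the cubic sub-blocks'' inside a region $R_i$ requires constructing a system of disjoint artificial solid cuts compatible with the prescribed cyclic positions of ovals, jumps, zigzags and inner black vertices, and the independence of the result from the choices made there again has to be argued rather than asserted.
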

\begin{remark}\label{replace}
	From this description,
	it immediately follows that the genuine gluing of two blocks along bold segments can be replaced either by junction between two other blocks, composed of parts of the previous blocks, or by artificial gluing along segments of solid edges, i.e., by a new block.
\end{remark}

\subsection{Real Parts of Maximally Inflected Curves}
\label{CR}
A complete description of the real part of a maximally
inflected nonsingular
trigonal curve,
\emph{i.e.} the description of the topology of the pair
$(\R\Sigma_k,\R C)$ is given in \cite[5.3]{Z21} and, taking into account Remark \ref{replace}, looks as follows:

The dessin of a maximally
inflected curve is obtained from a disjoint
union of blocks using junctions that transform the disks of the blocks into a single
disk.
Moreover, if all blocks are of type I and all gluings are junctions, the resulting dessin is of type I; otherwise, the resulting dessin is of type II.
\subsection{Blocks and Weak Equivalence of Dessins}
By \cite[Proposition 10, Lemma 1]{Z21}, for any~$d\ge1$, there exists a unique, up to
weak equivalence, block $\Gamma\subset D$ of type I and degree~$3d$.
\begin{theorem}\label{contact}
	A block of type II with at least two ovals is weakly equivalent to a dessin with a junction.
\end{theorem}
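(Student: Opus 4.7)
My strategy is to translate the statement to the skeleton level, where junctions are encoded as undirected edges. Via Proposition~\ref{prop.Sk} and the bijection of Theorem~\ref{cor.Sk}, the weak equivalence class of the dessin of the given block is controlled by the equivalence class of its abstract skeleton~$\Sk$, and by Definition~\ref{def.skeleton} the junctions of the dessin are in bijection with the undirected edges of~$\Sk$. Thus the task reduces to showing that, via the elementary moves of Subsection~\ref{equivalence}, $\Sk$ can be brought to an equivalent abstract skeleton with at least one undirected edge.

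Next, Proposition~\ref{block.existence} translates the hypotheses into combinatorial input on~$\Sk$. Write $d$ for the block's degree and $c,b,z$ for the numbers of chords (jumps/ovals), inner black vertices, and real isolated white vertices (zigzags), respectively, so that $b+c=d$, $z=3b+2c$, and each region $R_i$ of the chord-cut of the disk satisfies $z_i=c_i+3b_i$. Being of type~II forces $b\ge 1$ (since a type~I block is characterised by $b=0$), while the hypothesis of at least two ovals gives $c\ge 2$; hence $z\ge 7$ and the chord-cut contains a region~$R$ with $b_R\ge 1$ and $z_R\ge 3$, i.e.\ an inner black vertex together with several zigzag vertices on~$\partial R$ and at least one chord on its boundary.

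The construction then runs as follows. Fix an inner black vertex $v\in R$. Using \black-out of Figure~\ref{fig.Sk}(e) migrate $v$ onto $\partial R$, then rearrange the surrounding zigzag structure via the moves of Figure~\ref{fig.Sk}(f),(g) until two of the outgoing directed edges at the resulting real black vertex are immediate neighbours, each terminating at a real white vertex. Applying move~(h) of Figure~\ref{fig.Sk} replaces this pair of directed edges by a single undirected edge, which by Definition~\ref{def.skeleton} corresponds to a junction in the associated dessin. The latter is weakly equivalent to the original block and contains a junction, as required. (Remark~\ref{replace}, which allows one to re-express genuine gluings along bold segments as either junctions or artificial gluings, is the conceptual ancestor of move~(h); the present construction is its skeleton-level incarnation.)

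The principal obstacle is checking that the abstract skeleton axioms of Definition~\ref{def.a.skeleton} are preserved at each intermediate step; conditions~(3) (no first-neighbour cycles) and~(5) (the regional count $b_1+3b=v+z+i$) are the delicate ones, and must be verified by case analysis on the cyclic arrangement of $v$, the chord endpoints, and the zigzag vertices on~$\partial R$. The combinatorial slack provided by $z_R\ge 3$ ensures that the preparatory moves of Figure~\ref{fig.Sk}(f),(g) can always be applied legally, while the hypothesis $c\ge 2$ is precisely what guarantees a chord available to pair with the migrated black vertex, so that the immediate-neighbour configuration needed for move~(h) can be produced; with a single oval this final step breaks down, consistent with the stated hypothesis.
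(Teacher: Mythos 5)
Your overall strategy (pass to the skeleton via Proposition~\ref{block.existence}, then manufacture an undirected edge, which is a junction by Definition~\ref{def.skeleton}) is in the same spirit as the paper's proof, but two of your key steps do not hold up. First, your claim that type~II forces $b\ge 1$ is unjustified: a general block is an artificial gluing of cubic blocks along solid segments, and nothing in Definition~\ref{def.block} or Proposition~\ref{block.existence} says that a gluing of $d$ type~I cubic blocks (so $b=0$, $c=d$) in a non-standard cyclic arrangement must be of type~I. The paper avoids this entirely: it first disposes of the jumps (either two jumps joined by a solid segment already yield a junction via moves (d),(c) of Figure~\ref{fig.Sk}, or every jump has a neighboring zigzag and all jumps are removed by move~(f), which \emph{creates} $c$ inner black vertices, so that $d$ inner black vertices are present afterwards regardless of the original $b$), and then uses the type~II hypothesis only through the observation that if the remaining zigzags alternated with the ovals, the inverse moves would reconstruct the type~I block of Proposition~\ref{block.existence}. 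The non-alternation, together with $c\ge 2$, is what produces two \emph{neighboring} ovals separated by $r\ge 2$ zigzags; your region count $b_R\ge1$, $z_R\ge 3$ does not by itself locate such a configuration, and you never address the jumps at all.

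Second, the heart of your argument --- ``migrate $v$ to $\partial R$ by \black-out, rearrange via (f),(g) until two outgoing edges at the resulting real black vertex are immediate neighbours, then apply move (h)'' --- is asserted rather than proved. You yourself flag that the preservation of conditions (3) and (5) of Definition~\ref{def.a.skeleton} ``must be verified by case analysis,'' but that case analysis is exactly where the content of the theorem lies, and it is not supplied. (Note also that the move creating an undirected edge at a real black vertex with immediate-neighbor edges is move~(c) of Figure~\ref{fig.Sk}, not move~(h), which converts a junction into a pair of directed edges and back.) In the paper this step is done explicitly: after removing $[r/2]$ pairs of neighboring zigzags by move~(g), the junction is produced by the concrete skeleton transformations of Figures~\ref{0} and~\ref{1}, according to the parity of $r$. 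Without an explicit replacement for those two transformations, your proof has a genuine gap at its final and most important step.
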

\begin{proof}
	
	Let $B$ be a block of type II and $d\geq2$ its degree.
	If $B$ has two jumps
	connected by a solid segment, then a junction is obtained after moves of Figures \ref{fig.Sk} (d), (c) for the skeleton of $B$. Otherwise, each jump has its own neighboring zigzag, and all jumps can be eliminated using the move of Figure \ref{fig.Sk} (f). Thus,  according to Proposition \ref{block.existence}, a dessin with $d$ (inner) \black--vertices, $c$ ovals, and $z$ zigzags, $c\leq d$, $z\geq d$ is obtained. If the resulting dessin has the zigzags alternate with the ovals, then the inverse move returns us to a block of type $I$ according to the same proposition.
	
	Therefore, $B$ contains two neighboring ovals, between which (on at least one of the real segments) there are $r\geq2$ zigzags. We apply the move of Figure \ref{fig.Sk} (g) to $[r/2]$ triples consisting of the pair of real white vertices corresponding the  zigzags, and an inner black vertex. Depending on whether $r$ is even or odd, we transform the skeleton of the resulting dessin according to Figure \ref{0} or Figure \ref{1}.
	\begin{figure}[h]
		\begin{center}
			\scalebox{1}{\includegraphics{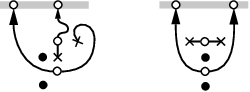}}\\
		\end{center}
		\caption{A junction with a cubic block of type II}\label{0}
	\end{figure}
	
	\begin{figure}[h]
		\begin{center}
			\scalebox{1}{\includegraphics{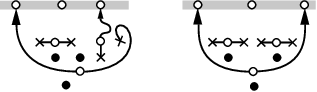}}\\
		\end{center}
		\caption{A junction with a block of degree 6}\label{1}
	\end{figure}
	
\end{proof}
\section{Singular curves of bidegree (4,3) on a hyperboloid and proper trigonal curves}\label{deg9}
\begin{definition}
	{\rm\emph{A positive (negative) Nagata transformation} (see \cite[\S\,2 (3)]{N}) is a fiberwise birational transformation $\Sigma_k\rightarrow\Sigma_{k+ 1}$ (respectively, $\Sigma_k\rightarrow\Sigma_{k-1}$),
	consisting of blowing up a point $p\in E_k$ (respectively, $p\notin E_k$) and then contracting the proper transform
	of the fiber $q^{-1}(p)$ to a point.}
\end{definition}
A positive (negative) Nagata transformation maps $E_k$ to $E_{k+1}$ (respectively, to $E_{k-1}$).

It is well known (see, e.g., \cite[3.1.1]{Degt}) that positive and negative Nagata transformations establish a correspondence between (trigonal) curves of bidegree $(m,3)$ on a hyperboloid and proper trigonal curves: the curve $C\subset\Sigma_0$ corresponds to its image $N(C)\subset\Sigma_m$ under a positive Nagata transformation that blows up the intersection points of $C$ with the curve $E_0\subset\Sigma_0=\Pb^1\times\Pb^1$.

Let us list the walls in the space $S_{4,3}$ and find a correspondence between the rigid isotopies of curves in the wall and the transformations of the dessins of the corresponding proper trigonal curves.
\subsection{Walls in the space $S_{4,3}$}
Figures \ref{omega}, \ref{gamma}, \ref{alpha} indicate, in the notation of \cite{Z99}, the real schemes of curves of classes $\omega^{\pm}_{inn}$, $\omega^{\pm}_{out}$, $\tilde{\omega}$, $\gamma^{\pm}_{inn}$, $\gamma^{\pm}_{out}$, $\tilde{\gamma}$, $\alpha^{\pm}_{lp}\<l\ra$, $\alpha^{\pm}_{ov}\<l\ra$, $\tilde{\alpha}\<l\ra$ in the space $S_{4,3}$, marked with the superscript "$+$" (for curves of classes $\gamma^{+}_{out}$, $\alpha^{+}_{lp}$, $\tilde{\alpha}$, the schemes  with a solitary singular point are shown). The schemes with the superscript "$-$" are obtained by reflection relative to a vertical fiber. The dessins and the skeletons in these figures are described below.

\begin{figure}[h]
	\begin{center}
		\scalebox{0.7}{\includegraphics{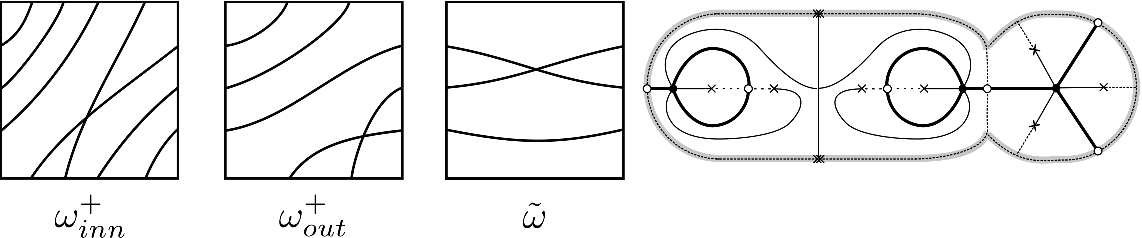}}\\
	\end{center}
	\caption{Real schemes of curves of classes $\omega^{+}_{inn}$, $\omega^{+}_{out}$, $\tilde{\omega}$ and the dessin of a corresponding trigonal curve}\label{omega}
\end{figure}

\begin{figure}[h]
	\begin{center}
		\scalebox{0.8}{\includegraphics{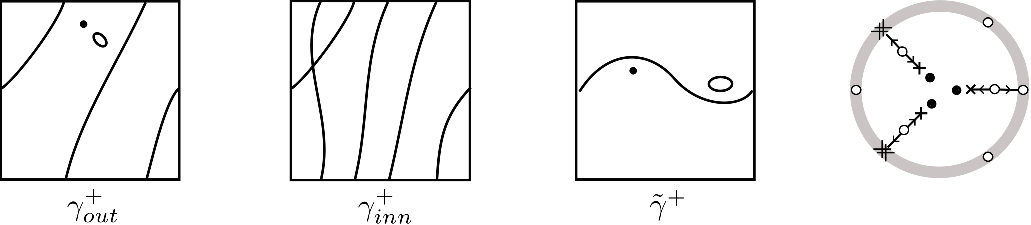}}\\
	\end{center}
	\caption{Real schemes of curves of classes $\gamma^{+}_{inn}$, $\gamma^{+}_{out}$, $\tilde{\gamma}$ and the skeleton of a corresponding trigonal curve}\label{gamma}
\end{figure}
\begin{figure}[h!]
	\begin{center}
		\scalebox{0.8}{\includegraphics{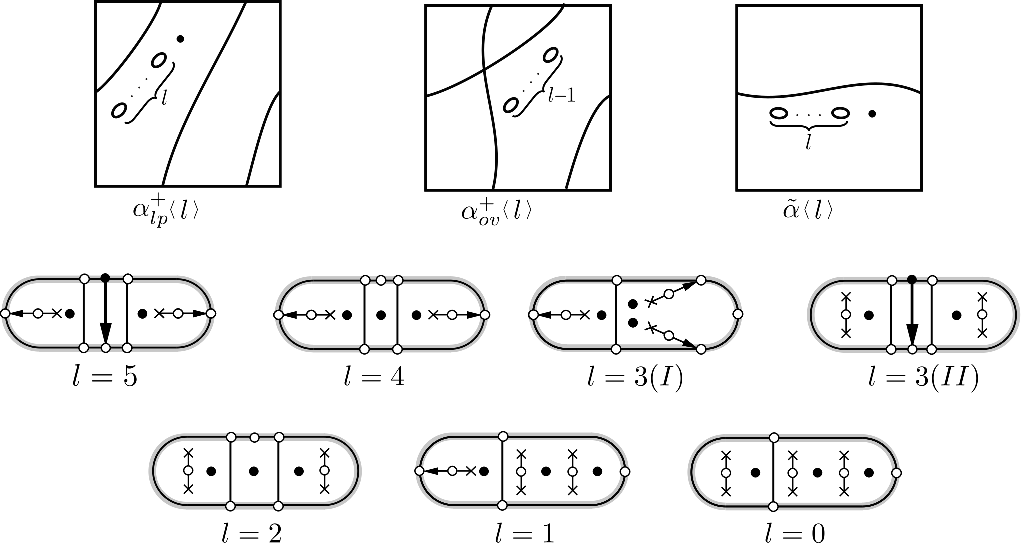}}\\
	\end{center}
	\caption{Real schemes of curves of classes $\alpha^{+}_{lp}\<l\ra$, $\alpha^{+}_{ov}\<l\ra$, $\tilde{\alpha}\<l\ra$ and skeletons of  corresponding trigonal curves}\label{alpha}
\end{figure}
In the paper \cite{Z99} it is proved that each of the complex schemes $\om^{\pm}_{\rm out}$, $\omt$,
$\ga^{\pm}_{\rm inn}$, $\ga^{\pm}_{\rm out}$, $\gat^{\pm}$,
$\al^{\pm}_{\rm ov}B$ $(B=\<1\ra$, $\<2\ra$, $\<3\ra_{I}^{\pm}$,
$\<3\ra_{II}$, $\<4\ra$, $\<5\ra^{\pm})$, $\alt B$ $(B=\<0\ra$, $\<1\ra$,
$\<2\ra$, $\<3\ra_{I}$,
$\<3\ra_{II}$, $\<4\ra$, $\<5\ra)$ corresponds to a single wall. The same remains to be proved for the complex schemes $\{\om^{\pm}_{\rm inn}$, , $\al^{\pm}_{\rm lp}B$ $(B=\<0\ra$, $\<1\ra$, $\<2\ra$, $\<3\ra_{I}$,
$\<3\ra_{II}$, $\<4\ra$, $\<5\ra)$.
\subsection{Singular Fibers and Nagata Transformations}\label{s.fiber}
 For a  trigonal curve $C\subset\Sigma_k$, a fiber of $\Sigma_k$ is called \emph{singular} if it intersects $C\cup E_k$ geometrically in fewer than four points. 
To obtain a proper trigonal curve on $\Sigma_3$ from a singular curve $C\in\De \smallsetminus S$ of bidegree $(4,3)$ on a hyperboloid using a Nagata transformation, we take, as an exceptional section $E_0\subset\Sigma_0$, a curve of bidegree $(0,1)$ passing through a singular point of $C$, and consider  a chart  $(x,y)$ on the hyperboloid, where $E_0$ and the singular fiber $F$ are given by the equations $y=\infty$ and $x=0$. Then the positive Nagata transformation $N$ centered at the point $p=F\cap C\cap E_0$ is given by the equality $(x,z)=(x,xy)$. Below, the curve $C$ and its image $N(C)$ are given by local equations with only the necessary initial terms indicated. For curves from the classes $\omega^{\pm}_{inn}$ and $\alpha^{\pm}_{lp}$, we need the Nagata transformations of the following
singular fibers (the fibers $N(F)$ and $N^2(F)$ of the curves $N(C)$ and $N^2(C)$ are denoted according to \cite[3.1.2]{Degt}):
\begin{enumerate}
	\item\label{s1} $p$ is a non-degenerate double point at which the curve $C$ is tangent to neither $F$ nor $E_0$. $C: x^2y^3+y+1=0$, $N(C): z^3+z+x=0$, fiber $N(F):\,\tilde{A}_0$;
	\item\label{s2} $p$ is a non-degenerate double point at which the curve $C$ is tangent to fiber $F$ and not tangent to $E_0$. $C: x^2y^3+xy^2+1=0$, $N(C): z^3+z^2+x=0$, fiber $N(F):\,\tilde{A}_0^*$;
	\item\label{s3} $p$ is a non-degenerate double point at which the curve $C$ is tangent to $E_0$ and not tangent to $F$. $C: x^3y^3+xy^2+y+1=0$, $N^2(C): z^3+z^2+xz+x^3=0$, fiber $N^2(F):\,\tilde{A}_1$;
	\item\label{s4} $p$ is a non-degenerate double point at which the curve $C$ is tangent to both $E_0$ and $F$. $C: x^3y^3+xy^2+1=0$, $N^2(C): z^3+z^2+x^3=0$, fiber $N^2(F):\,\tilde{A}_2$;
	\item\label{s5} $p$ is a cusp at which the tangent coincides with neither $E_0$ nor $F$.
	$C: x^2y^3+2xy^2+y+x^2y^2+1=0$, $N(C): z^3+2z^2+z+xz^2+x=0$, fiber $N(F):\,\tilde{A}_0^*$;
	\item\label{s6} $p$ is a cusp with a vertical tangent. $C: x^2y^3+1=0$, $N(C): z^3+x=0$, fiber $N(F):\,\tilde{A}_0^{**}$;
	\item\label{s7} $p$ is a cusp with a horizontal tangent. $C: x^3y^3+y+1=0$, $N^2(C): z^3+xz+x^3=0$, fiber $N^2(F):\,\tilde{A}_1^{*}$;
	\item\label{s8} $p$ is a nonsingular point of $C$ with a non-vertical and non-horizontal tangent, and $F$ intersects $C$ in three different points. $C: xy^3+y^2+1=0$, $N(C): z^3+z^2+x^2=0$, fiber $N(F):\,\tilde{A}_1$;
	\item\label{s9} $p$ is a non-singular point of $C$ with a non-vertical and non-horizontal tangent, and $F$ is tangent to $C$ (at a point different from $p$). $C: xy^3+y^2+x=0$, $N(C): z^3+z^2+x^3=0$, fiber $N(F):\,\tilde{A}_2$;
	\item\label{s10} $p$ is a non-singular point of $C$ with a vertical tangent, and $F$ intersects $C$ at a point different from $p$. $C: xy^3+y+x=0$, $N(C): z^3+xz+x^3=0$, fiber $N(F):\,\tilde{A}_1^*$;
	\item\label{s11} $p$ is the inflection point of  $C$ with a vertical tangent. $C: xy^3+1=0$, $N(C): z^3+x^2=0$, fiber $N(F):\,\tilde{A}_2^*$.
	
\end{enumerate}
\subsection{Curves $\omega^{\pm}_{inn}$}\label{w_inn}
A curve $C\in\omega^{\pm}_{inn}$ is hyperbolic. Its real scheme can be obtained by combining the real schemes $\langle(2,3)\rangle $ and $\langle(1,1)\rangle$ of the nonsingular branches of the curve that intersect transversally at a single point $p$ (see Figure \ref{omega}).
Therefore,  $C$
has three singular fibers: two fibers of the form \ref{s8} and a fiber $F_p$ of the form \ref{s1} from Subsection \ref{s.fiber}. Therefore,
the curve $N_3(C)\subset\Sigma_3$, where $N_3$ is the composition of three positive Nagata transformations, has two distinct nodal points and a nonsingular point $a\in N_3(F_p)$ lying on the image of the branch $(2,3)$ (on the arc of the curve bounded by the singular points that has an even intersection multiplicity with the line $y=0$).

\begin{lemma}\label{solidcut}
	The dessin of $N_3(C)$ is equivalent to a dessin with a genuine solid cut connecting an inner solid vertex to the singular vertices.
	
\end{lemma}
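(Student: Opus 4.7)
My plan is to identify the dessin $\Gamma$ of $N_3(C)$ and then exhibit, after a sequence of elementary moves, a solid chord whose two endpoints are the singular \cross-vertices and which contains exactly one inner monochrome solid vertex in its interior. From the Nagata computations in Subsection~\ref{s.fiber}, each of the two fibers of $C$ of type~\ref{s8} becomes a fiber of Kodaira type $\tilde{A}_1$ for $N_3(C)$, producing a singular \cross-vertex of full valency $4$ at which the local $j$--invariant behaves like $c/x^2$ with $c>0$; consequently this vertex is adjoined along $\partial D$ by two dotted half-edges and carries a single solid half-edge pointing into the open disk. The fiber $F_p$ of type~\ref{s1} is resolved by Nagata, so the distinguished point $a$ sits on a smooth fiber of $N_3(C)$ and contributes no singular vertex to $\Gamma$.

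The hyperbolicity of $C$ is inherited by $N_3(C)$, so the real part of $\Gamma$ consists only of dotted edges and the two singular \cross-vertices. By Theorem~\ref{s.max.inflected} I may replace $\Gamma$ by an equivalent maximally inflected dessin. Each of the two inner solid half-edges at the singular vertices terminates at a \black-vertex, which must lie in the interior of $D$ since a hyperbolic dessin has no real \black-vertices. The idea is then to apply restricted elementary moves from Subsection~\ref{graphmodif} — in particular, creating a bridge (Figure~\ref{fig.moves}(b)) and then performing a \black-out (Figure~\ref{fig.moves}(f)) — so as to merge these two interior \black-vertices into a single inner monochrome solid vertex lying on one solid chord that connects the two singular vertices. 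This chord, having endpoints at singular \cross-vertices and exactly one inner monochrome vertex in its interior, matches the third case of Subsection~\ref{cuts} and is orientation-preserving, hence is a genuine solid cut.

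The main technical obstacle is verifying that the elementary moves just described are admissible: the bridge creation must not introduce a directed monochrome cycle, and the resulting chord must actually split $D$ into two halves each of which is a legitimate dessin. I would address this by invoking the block decomposition of maximally inflected dessins from Subsection~\ref{CR} together with Proposition~\ref{block.existence}: the two solid half-edges issue from fibers of the same Kodaira type $\tilde{A}_1$ and are separated in $\Gamma$ by the location of the smooth fiber $N_3(F_p)$, so the block decomposition can be arranged so that they lie either in a single block or in two adjacent blocks joined by a junction. The vertex-count identities in Proposition~\ref{block.existence} then confirm that the proposed merger keeps us inside the class of valid dessins, which completes the argument.
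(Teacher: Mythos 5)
Your target configuration (a solid chord joining $s_1$ and $s_2$ through one inner solid monochrome vertex, i.e.\ the third case of Subsection~\ref{cuts}) is the right one, and your reading of which fibers of $N_3(C)$ become singular is correct. But the step that is supposed to produce this chord does not work. Creating a bridge (Figure~\ref{fig.moves}(b)) is an operation on the \emph{real} part of the dessin --- a bridge is by definition a pair of monochrome vertices joined by a real monochrome edge --- and \black-out pushes an inner \black-vertex onto $\partial D$ as a pair of real \black-vertices; neither move ``merges two interior \black-vertices into a single inner monochrome solid vertex'', and no elementary move can convert \black-vertices into monochrome ones, since that would change the degree of the $j$-invariant. (Note also that \black-out is unavailable here without first destroying hyperbolicity, since its output contains real solid and bold edges while you have just argued that all real edges are dotted; for the same reason Theorem~\ref{s.max.inflected}, which you invoke, does not apply --- it is stated for non-hyperbolic curves, and you have asserted that $N_3(C)$ is hyperbolic.) The inner solid monochrome vertex has to be produced by a \emph{monochrome modification} (Figure~\ref{fig.moves}(a)) of the two solid edges issuing from $s_1$ and $s_2$, stopped at an intermediate position, and the actual content of the lemma is the preparatory work needed to make that modification admissible: the two solid edges must first be brought onto the boundary of one common, non-triangular region.

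That preparatory work is exactly what your proposal omits. The paper's proof gets it by a parity count (the degree is $9$, so one of the two arcs of $\partial D$ cut out by $s_1,s_2$ carries an odd number of \white-vertices), reduces that arc by \white-in to a single \white-vertex $w_1$, follows its inner bold edge to a \black-vertex $b$, normalizes the star of $b$ by monochrome modifications, and then joins $b$ to $s_1$ by a solid edge so that the solid edges at $s_1$ and $s_2$ lie on the boundary of a common non-triangular region. Your substitute --- appealing to the block decomposition of Subsection~\ref{CR} and the counts of Proposition~\ref{block.existence} --- is a non sequitur: that decomposition describes \emph{nonsingular} maximally inflected dessins assembled from blocks along dotted junctions and artificial solid cuts, and its oval/jump/zigzag counts say nothing about whether your proposed surgery yields a dessin, nor about where the two inner solid half-edges sit relative to one another. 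As written, the argument has a genuine gap at its central step.
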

\begin{proof}
	Since the degree 9 of the dessin of $N_3(C)$ is odd, it has an odd number of \white--vertices. On a real segment of the dessin bounded by singular vertices $s_1, s_2$ and having an odd number of \white--vertices, we remove all these \white--vertices except one using \white--in and destroy any bridges that may have appeared. The remaining \white--vertex $w_1$ is connected by real edges $\delta_1,\delta_2$ with $s_1, s_2$ and by a bold edge $\beta_1$ with some \black--vertex $b$. Let $\beta_2, \beta_3$ be other bold edges outgoing $b$ (see Figure \ref{s.cut}), and $w_2$ be the \white--vertex that is the endpoint of the edge $\beta_2$. After applying monochrome modifications if necessary, we obtain that $w_2$ is the endpoint of the edge $\beta_3$ and the endpoint of the path $b,\sigma_1,c,\delta_3,w_2$, where $\sigma_1$ is the solid edge lying between $\beta_1$ and $\beta_2$, $c$ is a \cross-vertex, and $\delta_3$ is a dotted edge. We connect vertices $b$ and $s_1$ by a solid edge $\sigma_2$, applying monochrome modifications if necessary. The region of the resulting dessin that contains the path $s_1,\sigma_2,b,\beta_1,\delta_2,s_2,\sigma_3$ on the boundary, where $\sigma_3$ is a solid edge, is not triangular, so during the (solid) monochrome modification of the edges $\sigma_2,\sigma_3$, one can stop at an intermediate position, creating an inner monochrome vertex and, thus, the desired cut.
\end{proof}
\begin{figure}
	\begin{center}
		\scalebox{0.8}{\includegraphics{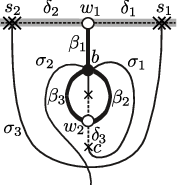}}\\
	\end{center}
	\caption{A solid cut}\label{s.cut}
\end{figure}
\begin{remark}\label{ssp}
	The fiber $N_3(F_p)$ corresponds to a point in the dessin of the curve $N_3(C)$ that lies on the real segment of the dessin bounded by the singular vertices $s_1, s_2$ and having an even number of \white--vertices. Otherwise, the moves of Figure \ref{fig.moves} (c), (g)  maps $s_1, s_2$ to an inner singular vertex, which is impossible, according to the statement at the beginning of  Subsection~\ref{w_inn}.
\end{remark}
\begin{theorem}
	Each of the classes $\omega^{+}_{inn}$ and $\omega^{-}_{inn}$ is connected, i.e., consists of a single wall.
\end{theorem}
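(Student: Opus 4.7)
The plan is to combine Lemma~\ref{solidcut} with Remark~\ref{ssp} to exhibit a unique normal form for the dessin of $N_3(C)$ associated to any $C\in\omega^{+}_{inn}$, and then to transfer uniqueness of the dessin to the rigid isotopy class via the dictionary of Section~\ref{S.skeletons}.

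First, I would fix $C\in\omega^{+}_{inn}$ and pass to the proper trigonal curve $N_3(C)\subset\Sigma_3$ produced by three positive Nagata transformations; as recorded at the beginning of Subsection~\ref{w_inn}, it is nodal-cuspidal with exactly two real nodes $s_1,s_2$ coming from the fibers of type~(\ref{s8}), plus a distinguished nonsingular real point on $N_3(F_p)$. Since $N_3(C)$ has no imaginary singularities, Theorem~\ref{s.max.inflected} lets me assume (up to rigid isotopy) that it is maximally inflected, so it is faithfully represented by a dessin $\Gamma$ of degree~$9$.

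Next, I would invoke Lemma~\ref{solidcut} to bring $\Gamma$, via weak equivalence, into a normal form carrying a genuine solid cut $\sigma$ that joins an inner solid vertex $v$ to the two singular \cross--vertices $s_1,s_2$, and apply Remark~\ref{ssp} to locate the image of $N_3(F_p)$ on the unique real arc bounded by $\{s_1,s_2\}$ having an even number of \white--vertices. The cut $\sigma$ splits $\Gamma$ into two sub-dessins $\Gamma_1,\Gamma_2$ whose combinatorial profiles (total degree~$9$, and the numbers of ovals, jumps, zigzags and inner \black--vertices on each side) are pinned down by the real scheme of $C$ together with the parity constraint above. Proposition~\ref{block.existence} then identifies each $\Gamma_i$ uniquely up to restricted equivalence, and regluing along $\sigma$ yields a normal form for $\Gamma$ unique up to equivalence. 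Reading Proposition~\ref{equiv.zigzag} in the nodal-cuspidal setting of Section~\ref{deg9} converts this into the assertion that all curves in $\omega^{+}_{inn}$ are mutually rigidly isotopic; the class $\omega^{-}_{inn}$ follows by reflection across a vertical fiber.

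The hard part will be the combinatorial bookkeeping in the second step: one has to verify that the real scheme of a curve in $\omega^{+}_{inn}$, together with the parity constraint from Remark~\ref{ssp} and the position of the solid cut supplied by Lemma~\ref{solidcut}, actually forces a single combinatorial profile on each side of $\sigma$, with no latent freedom for inequivalent dessins to sneak in. This rigidity is precisely what was missing from the argument of \cite{Z99}, and making sure the hypotheses of Proposition~\ref{block.existence} genuinely apply to each $\Gamma_i$ is, in my view, the technical heart of the proof.
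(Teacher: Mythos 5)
Your proposal follows essentially the same route as the paper: after Lemma~\ref{solidcut} produces the genuine solid cut, the dessin of $N_3(C)$ splits into a cubic piece of type II and a degree-6 piece with one oval, each unique up to weak equivalence as a block, and the class $\omega^{-}_{inn}$ is handled by a mirror symmetry. The only cosmetic difference is that the paper pins down the two pieces explicitly and cites the uniqueness of the cubic type II block and of the degree-6 block from \cite{Z21} (via Theorem~\ref{max.inflected}), rather than leaving the combinatorial identification of the profiles on each side of the cut as an open bookkeeping step.
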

\begin{proof}
	After the cut specified in Lemma \ref{solidcut}, we obtain a cubic dessin of type II and a dessin of degree 6 with one oval, which, by Theorem \ref{max.inflected}, are weakly equivalent to the corresponding blocks (see the dessin on Figure \ref{omega}). The latter are unique up to weak equivalence according to \cite[Proposition 8, Lemma 1]{Z21}.
	
	If a curve $C\in\omega^{+}_{inn}$ is constructed from a proper trigonal curve $C'\subset\Sigma_3 $, then $C'=N_3(C)$, and a curve in $\omega^{-}_{inn}$ symmetric to $C$ with respect to $E_0$ can be obtained from a curve that in the affine chart $\Sigma_3\smallsetminus(E_3\cup N_3(F_p))$ is symmetric to $C'$ with respect to the $x$-axis, and obviously has the same dessin as $C'$.
\end{proof}
\begin{remark}
	Curves in the classes $\omega^{\pm}_{out} $ and $\tilde{\omega}$ differ from the curve $C\in\omega^{\pm}_{inn}$ only by the choice of the point $a\in N_3(C)$. Let $C'$ be the arc of the image of the branch $(2,1)$, bounded by singular points and having odd intersection multiplicity with the line $y=0$. The point $a$ lies on  $C'$ for  $C\in\omega^{\pm}_{out}$, and  on the image of the branch $(1,0)$ for  $C\in\tilde{\omega}$. Therefore, the same arguments prove  connectedness of these classes as well.
	
	\end{remark}
\subsection{Curves $\alpha^{\pm}_{lp}\<l\ra$}
A curve $C\in\alpha^{\pm}_{lp}\<l\ra$ is nonhyperbolic. Its real part contains a (possibly singular) branch realizing the class $(1,\pm2)$ in $H_1(\R X)$, and $l$ ovals if the singular point does not lie on an oval (see Figure \ref{alpha}). Therefore, $C$ has either three singular fibers: two fibers of the forms \ref{s8}-\ref{s11} and a fiber $F_p$ of one of the forms \ref{s1}, \ref{s2}, \ref{s5}, \ref{s6} from  Subsection~\ref{s.fiber}, or
two singular fibers: a fiber of one of the forms \ref{s8}-\ref{s11} and a fiber $F_p$ of one of the forms \ref{s3}, \ref{s4}, \ref{s7}.
Consequently, a proper trigonal curve $N_3(C)\subset\Sigma_3$ has the singular fibers listed in Subsection~\ref{s.fiber}. To return to~$C$ using negative Nagata transformations, we need to blow up the singular points of~$N_3(C)$ and the point $a$ lying on the image of the branch $(1,\pm2)$ (on the arc of the curve bounded by the singular points that has an even intersection with the line $y=0$).

\begin{remark}\label{gamma_out}
	For $l=1$, the curve $C$ is of type II. A curve of type I with a similar real scheme {\rm (see Figure \ref{gamma})} belongs to one of the walls of $\gamma^{\pm}_{out}$. The uniqueness of the latter, as well as of the walls $\tilde{\gamma}^{\pm}$, $\gamma^{\pm}_{inn} $, follows from the uniqueness {\rm (see \cite[Proposition 10, Lemma 1]{Z21})} of the type~I block of degree 9,  with the skeleton in Figure {\rm\ref{gamma}}, since these walls differ only in the choice of the point $a\in N_3(C)$. Let $C'$ be an arc of the image of the branch $(1,\pm2)$, bounded by the singular points. For  $\gamma^{\pm}_{out}$/$\tilde{\gamma}^{\pm}$  the point $a$ lies on  $C'$ that has even/odd multiplicity of intersection with the line $y=0$. For  $\gamma^{\pm}_{inn} $ the point $a$ lies on the oval.
\end{remark}

\begin{lemma}\label{isolp-ts}
	There exists a nodal-cuspidal curve with two 
	solitary singular points that is rigidly isotopic to the curve $N_3(C)$.
\end{lemma}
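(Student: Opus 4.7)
My plan is to exploit the dessin and skeleton machinery of Section \ref{S.skeletons} together with the singular fibre analysis of Subsection \ref{s.fiber}. First I would invoke Theorem \ref{s.max.inflected} to replace $N_3(C)$ by a rigidly isotopic maximally inflected nodal-cuspidal curve; after this reduction the dessin is unramified and every \cross-vertex is real, so that solitary singular points of the curve correspond precisely to solitary \cross-vertices of the dessin in the sense of the convention introduced before Proposition \ref{s-oval}.

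Next I would locate the distinguished fibres of $N_3(C)$. For $C\in\al^{\pm}_{lp}\<l\ra$ the lp-point sits on a fibre $F_p$ of one of the types \ref{s1}, \ref{s2}, \ref{s5}, \ref{s6} (in the three-singular-fibre case) or \ref{s3}, \ref{s4}, \ref{s7} (in the two-singular-fibre case), while the remaining singular fibres, coming from the intersection of $C$ with the auxiliary section $E_0$, are of types \ref{s8}--\ref{s11}. Reading off the Weierstrass normal forms listed in Subsection \ref{s.fiber}, one checks that both at the image of $F_p$ and at a suitably chosen second singular fibre the local equation of $N_3(C)$ admits a small real perturbation inside the nodal-cuspidal stratum that produces an isolated real double point, i.e.\ a solitary \cross-vertex.

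I would then carry out the construction at the level of dessins. Using monochrome modifications together with the \white-in, \black-in and \cross-in moves of Subsection \ref{graphmodif}, and applying Proposition \ref{s-oval} to slide newly created solitary \cross-vertices past neighbouring ovals, I would clear the real segment through the image of $F_p$ of its \white-vertices and arrange its adjoining solid edges into the star of a valency-$4$ solitary \cross-vertex; a strictly parallel manipulation applied to a second \cross-vertex (chosen from among those coming from types \ref{s8}--\ref{s11}) produces a second solitary \cross-vertex. The resulting dessin represents a nodal-cuspidal curve with two solitary singular points, and the sequence of elementary moves realises the desired rigid isotopy.

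The main obstacle I anticipate is the global combinatorial bookkeeping: at every intermediate stage the graph must remain a legitimate dessin of a nodal-cuspidal trigonal curve (no directed monochrome cycle arising, no collision of a fourth discriminant root), and the two solitary \cross-vertices must be arranged compatibly with the real scheme prescribed by the \ref{s8}--\ref{s11}-type fibre of $N_3(C)$ and with the number $l$ of ovals. Verifying this compatibility requires a case-by-case check through the subcases of Subsection \ref{s.fiber}, and this is where I expect the real work of the proof to lie.
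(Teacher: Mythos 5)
Your first reduction (invoking Theorem \ref{s.max.inflected} to pass to an unramified dessin) matches the paper, but the central step of your argument does not work as stated. You claim that, reading off the Weierstrass normal forms of Subsection \ref{s.fiber}, the local equation of $N_3(C)$ at each relevant singular fibre ``admits a small real perturbation inside the nodal-cuspidal stratum that produces an isolated real double point.'' The real type of a non-degenerate double point (solitary versus crossing) is locally constant along any deformation that keeps the point a node; it can change only when the node degenerates further, e.g.\ to a cusp. So no small perturbation within the stratum converts a crossing node of $N_3(C)$ into a solitary one: the node must be transported through a cusp (Figure \ref{isol}) or through an $\tilde{A}_1^*$ fibre (Figure \ref{A1}), and whether such a path exists inside the wall is a global question about the dessin, not a local one about the normal form.

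That global question is precisely the content of the paper's proof, and it is the part you defer to ``combinatorial bookkeeping.'' The paper argues as follows: if a singular vertex $s$ is joined to a non-singular \cross--vertex by a real dotted edge, Figure \ref{isol} applies at once. Otherwise $s$ sits between two dotted segments each carrying an odd number of \white--vertices; after \white-in one perturbs the singular vertices into ovals, cuts the resulting unramified dessin into blocks (Subsection \ref{CR}), and uses Theorem \ref{contact} to conclude that the block containing the segment coming from $s$ is of type II with at most one oval, hence has a jump adjacent to a zigzag --- and it is this jump that licenses the moves of Figures \ref{A1} and \ref{isol}. A final degenerate configuration (both singular vertices on one dotted segment cut into three pieces with odd numbers of \white--vertices) is excluded via Remark \ref{ssp}. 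Without an argument of this kind --- in particular without Theorem \ref{contact}, which you never invoke --- your proposal does not establish that the required jump or adjacent \cross--vertex is actually available next to each singular vertex, so the lemma is not proved.
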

\begin{proof}
	The singular points of $N_3(C)$ correspond to singular vertices of its dessin. Suppose that a singular vertex $s$ is not solitary. If $s$ is connected by a real dotted edge to a (non-singular) \cross-vertex, then $s$ can be made solitary using the transformation of Figure \ref{isol}.
	
	Now suppose that  $s$ is connected with two non-singular \cross-vertices by real dotted segments containing an odd number of \white--vertices. Using \white-in, we leave one \white--vertex on each of these segments.
	By Theorem \ref{s.max.inflected}, the dessin of $N_3(C)$ is unramified, so the result $\Gamma$ of its perturbation that turns singular vertices into ovals,
	breaks into blocks after genuine cuts along the dotted edges (see  Subsection~\ref{CR}). Consequently, the solid segment obtained from $s$ lies in one of the blocks between two zigzags; therefore, the block is of type II and, by Theorem \ref{contact}, has at most one oval. Consequently, a jump lies near one of the zigzags. Returning to the dessin of $N_3(C)$ and applying the transformations of Figures \ref{A1} and \ref{isol} to the fragment of the dessin containing the \black--vertex of this jump, we obtain a solitary singular vertex.
	\begin{figure}
		\begin{center}
				\scalebox{0.8}{\includegraphics{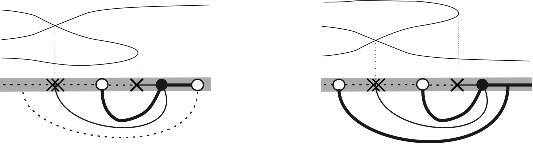}}\\
		\end{center}
		\caption{ A passage of a nodal point through a point $\tilde{A}_1^*$}\label{A1}
	\end{figure}
	Finally, the last possible case that prevents us from applying the transformation of Figure \ref{isol} immediately is when both singular vertices $s_1,s_2$ lie between two (nonsingular) \cross--vertices $v_1, v_2$ on the same dotted segment and split it into three segments, each containing an odd number of \white--vertices (since  Remark \ref{ssp} is obviously true also for the curve $C$, there is an odd number of \white--vertices between $s_1,s_2$). Using \white-in, we leave one \white--vertex on each of these segments. The same arguments as in the previous case show that a jump lies near one of the  \cross--vertices $v_1, v_2$, so applying the transformation of Figure \ref{A1} removes the \white--vertex between $s_1,s_2$, resulting in a dessin of the curve that does not lie in $\alpha^{\pm}_{lp}\<l\ra$ by virtue of Remark \ref{ssp}.
	\end{proof}
	Using the proven lemma, we will further assume that the singular vertices of the dessin $N_3(C)$ correspond to solitary singular points. Denote by $\Gamma$ the result of the perturbation of this dessin that turns the singular vertices into ovals.
	\begin{lemma}\label{1white}
	The dessin of the curve $N_3(C)$ is weakly equivalent to a dessin with a single real \white--vertex.
	\end{lemma}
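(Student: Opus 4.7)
The plan is to iteratively apply the two weak-equivalence moves of Subsection~\ref{graphmodif}, namely the removal of a neighboring jump-zigzag pair (Figure~\ref{JZ}) and the removal of a pair of neighboring zigzags (Figure~\ref{ZZ}), each of which deletes two real \white-vertices from the dessin. Since $N_3(C)$ has degree $9$ and, by Lemma~\ref{isolp-ts}, the two singular vertices of its dessin are solitary and hence flanked by solid edges rather than by \white-vertices, the number of real \white-vertices remains odd throughout the process; thus the reduction can only terminate with a single real \white-vertex.

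The first step is to use \white-in (and the destruction of any resulting bridges) to leave at most one real \white-vertex on each maximal dotted or bold real segment, so that every oval carries no \white-vertex, every zigzag carries exactly one \white-vertex between two \cross-vertices, and every jump carries exactly one \white-vertex between two \black-vertices. By Theorem~\ref{s.max.inflected}, we may further assume the dessin is maximally inflected; perturbing the two solitary \cross-vertices into ovals then produces, as in Subsection~\ref{CR}, a decomposition of the resulting unramified dessin into blocks joined by junctions along shared zigzag segments.

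The reduction itself proceeds block by block. In a block of type~I, every \black-vertex is real and by Proposition~\ref{block.existence} contributes a jump flanked on the real boundary by a zigzag; the move of Figure~\ref{JZ} removes such a jump-zigzag pair. A block of type~II contains inner \black-vertices and, if it has at least two ovals, is by Theorem~\ref{contact} weakly equivalent to a dessin with a junction, creating the zigzag adjacency needed to remove its jump. Once no jumps remain, the leftover zigzags together with the remaining inner \black-vertices are eliminated in pairs by the move of Figure~\ref{ZZ} until the odd-parity constraint forces exactly one real \white-vertex to survive.

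The principal obstacle is to guarantee at each stage the existence of either a neighboring jump-zigzag pair or a pair of neighboring zigzags ready for the appropriate move. Theorem~\ref{contact} is the crucial tool here, converting any uncooperative type~II block into a dessin whose junction supplies the needed adjacency. A secondary care is to verify that none of these moves disturbs the two solitary singular vertices, which holds because Figures~\ref{JZ} and~\ref{ZZ} act away from singular \cross-vertices and the solid edges flanking a solitary vertex are preserved throughout.
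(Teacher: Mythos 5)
There is a genuine gap: your termination argument does not work. Parity only shows that the number of real \white--vertices stays odd under the moves of Figures~\ref{JZ} and~\ref{ZZ}; it does not force the process to reach one. The reduction stalls precisely when the surviving zigzags are separated by ovals, so that no zigzag has a neighboring zigzag or jump --- this is exactly the configuration confronted in the proof of Theorem~\ref{contact} (``if the resulting dessin has the zigzags alternate with the ovals\dots''), and it genuinely occurs: the type~I block of degree~9 reduces by three jump--zigzag removals to three ovals alternating with three zigzags and then stops at three real \white--vertices. For $l=1$ the perturbed dessin $\Gamma$ has exactly three ovals, so a priori it could be that type~I block; the paper must, and does, exclude this via Remark~\ref{gamma_out} (a type~I block of degree 9 corresponds to the $\gamma^{\pm}_{\rm out}$, $\tilde\gamma^{\pm}$, $\gamma^{\pm}_{\rm inn}$ walls, not to $\alpha^{\pm}_{\rm lp}\langle l\rangle$). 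Your proposal never makes this exclusion, so the ``principal obstacle'' you name is not actually overcome.

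The paper's route is structural rather than parity-based: having ruled out the type~I block, it invokes Theorem~\ref{contact} to write $\Gamma$ as a junction of three cubic blocks or of a cubic block with a block of degree~6, and then applies the moves of Figures~\ref{JZ}, \ref{ZZ} to the outer blocks (respectively, to both blocks) of that explicit decomposition, where the needed adjacencies are present by construction, before contracting two ovals back to singular vertices. Your block-by-block reduction also misapplies Theorem~\ref{contact}: the cubic blocks occurring in the decomposition have at most one oval (a type~II cubic block has none, and no jumps either, since $b+c=1$ forces $c=0$), so the theorem gives you nothing for them, and it is a statement about single blocks, not about the already-junctioned dessin $\Gamma$. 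To repair the proof you need the exclusion of the type~I block and the explicit junction decomposition before any counting.
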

	\begin{proof}
	By Remark \ref{gamma_out}, the dessin $\Gamma$ cannot be a block of type I, so by Theorem \ref{contact}, the dessin is a junction of three cubic blocks or a cubic block and a block of degree 6 (see Figure \ref{blocks6}). The transformations of Figures \ref{JZ}, \ref{ZZ} in the outer blocks in the first case and in both blocks in the second, followed by contraction of  two ovals to singular points, yield the desired dessin.
	\end{proof}
	\begin{figure}
	\begin{center}
		\scalebox{0.9}{\includegraphics{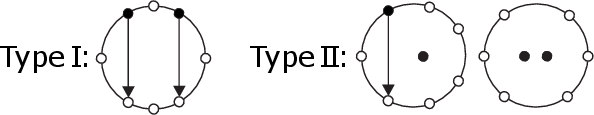}}\\
	\end{center}
	\caption{Blocks of degree 6 up to weak equivalence}\label{blocks6}
	\end{figure}
	\begin{theorem}
	A wall in the class of curves $\alpha^{\pm}_{lp}\<l\ra$ is uniquely determined by a complex scheme.
	\end{theorem}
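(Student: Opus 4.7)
The plan is to show that any two curves $C_1,C_2\in\alpha^{\pm}_{lp}\<l\ra$ with the same complex scheme are rigidly isotopic, by first reducing to the proper trigonal curves $N_3(C_1),N_3(C_2)\subset\Sigma_3$, proving that their dessins are weakly equivalent, and then returning to $S_{4,3}$ via negative Nagata transformations exactly as in the proof for $\om^{\pm}_{\rm inn}$ and as spelled out in Remark \ref{gamma_out}.

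By Lemma \ref{isolp-ts} I may assume that both singular vertices of the dessin of $N_3(C_i)$ are solitary, and by Lemma \ref{1white} that the dessin contains a single real \white--vertex. Let $\Gamma_i$ denote the unramified dessin of degree~$9$ obtained from the dessin of $N_3(C_i)$ by perturbing the two solitary \cross--vertices into ovals. The complex scheme of $C_i$ fixes the number of ovals and zigzags of $\Gamma_i$, the type I or II, and the arrangement relative to the long component of $N_3(C_i)$. By Theorem \ref{contact} combined with Proposition \ref{block.existence} and the enumeration of blocks of degree $6$ in Figure \ref{blocks6}, $\Gamma_i$ is weakly equivalent to a junction of at most three blocks whose combinatorial shape is forced by these invariants.

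It remains to locate the pair of solitary \cross--vertices inside the junction. By Remark \ref{ssp} they must lie on the real segment bounded by the images of the two other singular fibers that contains an even number of \white--vertices; within that segment, Proposition \ref{s-oval} allows them to be freely exchanged with neighboring ovals. Together these observations pin down their position, up to restricted equivalence of skeletons, from the data $(\pm,l)$ and the scheme label, with the $\<3\ra_I/\<3\ra_{II}$ dichotomy for $l=3$ corresponding precisely to the two genuinely distinct placements. Hence the dessins of $N_3(C_1)$ and $N_3(C_2)$ are weakly equivalent, and Proposition \ref{equiv.zigzag} (extended to nodal-cuspidal curves via Theorem \ref{s.max.inflected}) gives a rigid isotopy between $N_3(C_1)$ and $N_3(C_2)$. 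Blowing down at the common distinguished point $a$ lying on the image of the branch $(1,\pm 2)$, as in Remark \ref{gamma_out}, returns a rigid isotopy between $C_1$ and $C_2$.

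The main obstacle is the case-by-case bookkeeping in the middle step for small values of $l$, where several a priori distributions of ovals and zigzags among the cubic and degree-six blocks of Figure \ref{blocks6} are combinatorially possible; eliminating all but one requires combining Proposition \ref{block.existence} with the parity constraint of Remark \ref{ssp} and with the restriction on jumps imposed by Theorem \ref{contact}. The $\pm$ symmetry is then handled exactly as in the $\om^{\pm}_{\rm inn}$ case, by reflecting the dessin about the $x$-axis of the affine chart $\Sigma_3\smallsetminus(E_3\cup N_3(F_p))$.
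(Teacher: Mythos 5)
Your overall strategy coincides with the paper's: reduce to the proper trigonal curve $N_3(C)$, normalize the singular vertices via Lemma \ref{isolp-ts} and Lemma \ref{1white}, pass to the perturbed nonsingular dessin $\Gamma$, use Proposition \ref{s-oval} to free the positions of the solitary vertices, and handle the $\pm$ symmetry by reflection in the $x$-axis of the affine chart. However, the step you describe as ``the combinatorial shape is forced by these invariants'' and then set aside as ``case-by-case bookkeeping'' is not bookkeeping --- it is the entire mathematical content of the theorem, and asserting it without proof leaves a genuine gap. The point is that for a fixed complex scheme there are, a priori, \emph{several inequivalent-looking} block decompositions of $\Gamma$: for $l=2$ and $l=1$ the degree-$9$ dessin can be presented as a junction of a cubic block of type I with a degree-$6$ block, or of a cubic block of type II with a degree-$6$ block, both realizing the same numbers of ovals and zigzags and the same type. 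Uniqueness of the chamber requires exhibiting explicit skeleton moves connecting these presentations; the paper does this with the moves (a), (f), (g), (h) of Figure \ref{fig.Sk} together with the specific transformations of Figures \ref{0} and \ref{l=2}. Similarly, for $l=3$ in the type II case one must show that the junction of three cubic blocks (one of type I, two of type II) is independent of the order of the blocks, which the paper derives from the $(M-1)$-curve permutation result of \cite[6.4.2]{DIK08}. None of this follows from Proposition \ref{block.existence}, Theorem \ref{contact} and Figure \ref{blocks6} alone, which only constrain what the blocks \emph{can} be, not that all admissible gluings are weakly equivalent.

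A secondary inaccuracy: you interpret the $\<3\ra_{\rm I}/\<3\ra_{\rm II}$ dichotomy as ``two genuinely distinct placements'' of the solitary \cross--vertices. In the paper's notation this subscript records the type (I or II) of the curve, i.e.\ part of the complex scheme, and the two labels are treated as two separate uniqueness statements (all blocks of type I joined by junctions, versus a decomposition containing a type II block). The placement of the solitary vertices is already disposed of by Proposition \ref{s-oval} after Lemma \ref{1white}, so it plays no role in distinguishing $\<3\ra_{\rm I}$ from $\<3\ra_{\rm II}$. To complete your argument you would need to supply, for each $l\le 3$, the explicit chains of skeleton moves identifying the competing block decompositions, exactly as in the paper's six-case analysis.
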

	\begin{proof}
	After the transformation specified in Lemma \ref{1white}, singular vertices can be interchanged with any ovals according to Proposition \ref{s-oval}. Therefore, it suffices to prove the theorem for the dessin $\Gamma$.
	Consider all values of $l$. The skeletons of the dessins studied below are indicated in Figure \ref{alpha}.
	\begin{enumerate}
		\item For $l=5$, by \cite[Lemma 1]{Z21}, the dessin $\Gamma$ is unique up to weak equivalence as an $M$-curve dessin.
		\item For $l=4$, by \cite[6.4.2]{DIK08} and \cite[Lemma 1]{Z21}, the dessin $\Gamma$ is unique up to weak equivalence as an $(M-1)$-curve dessin.
		\item For $l=3$, as already mentioned in the proof of Lemma \ref{1white}, the dessin $\Gamma$ is a junction of a cubic block $K$ and a dessin $B$ of degree 6. Moreover, if $\Gamma$ is of type~I, then $K$ and $B$ are blocks of type I, unique up to weak equivalence by \cite[Proposition 8, Lemma 1]{Z21}. If $\Gamma$ is of type II, then $B$ contains a junction by Theorem \ref{contact}, otherwise it would be a block of type I with two ovals, and therefore $K$ would also be a block of type I. Therefore, $\Gamma$ is a junction of three cubic blocks, one of type I and two of type II. The union of a block of type I and a block of type II yields an $(M-1)$-curve dessin, in which the blocks can be permuted according to \cite[6.4.2]{DIK08}. Therefore, $\Gamma$ is equivalent to a dessin with a central cubic block of type I and is therefore unique.
		\item For $l=2$, as for $l=3$, $\Gamma$ is a junction of a cubic block $K$ and a dessin $B$ of degree 6. If $K$ is of type I, the following skeleton moves of $\Gamma$
		allow us to obtain a junction with endpoints in neighboring ovals, turning $K$ into a cubic block of type II: moves (f), (g) of Figure \ref{fig.Sk} applied to $K$ and $B$, move (h) of Figure \ref{fig.Sk} applied to the junction, move (a) \ref{fig.Sk} applied to edges $e_1,e_2$ on Figure \ref{l=2}, move of Figure \ref{0} applied to edges $e'_1,e_3$ on Figure \ref{l=2}.
		\begin{figure}[h!]
			\begin{center}
				\scalebox{1.2}{\includegraphics{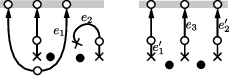}}\\
			\end{center}
			\caption{A junction with a cubic block of type I}\label{l=2}
		\end{figure}
		\item For $l=1$, the dessin $\Gamma$ is a junction of a cubic block $K$ and a block $B$ of degree 6. If $K$ is of type I, the same skeleton moves of $\Gamma$ as for $l=2$ carry an oval from $K$ to $B$, placing it near either end of the junction. The inverse moves and the move of Figure \ref{fig.Sk} (g) yield the skeleton indicated in  Figure \ref{alpha} ($l=1$). Therefore, the uniqueness of $\Gamma$ follows from its equivalence to the junction of a cubic block of type I and a block of degree 6 without ovals.
		\item For $l=0$, the dessin $\Gamma$ is a junction of a type II cubic block $K$   and a block $B$ of degree~6 without ovals and is therefore unique.
	\end{enumerate}
	If a curve $C\in\alpha^{+}_{lp}\<l\ra$ is constructed from a proper trigonal curve $C'\subset\Sigma_3 $, then $C'=N_3(C)$, and a curve in $\alpha^{-}_{lp}\<l\ra$, symmetric to $C$ with respect to $E_0$, can be obtained from a curve that in the affine chart $\Sigma_3\smallsetminus(E_3\cup N_3(F_p))$ is symmetric to $C'$ with respect to the $x$-axis, and obviously has the same dessin as $C'$.
		
	\end{proof}
	\begin{remark}
	Curves from the classes $\tilde{\alpha}\<l\ra$, $\alpha^{\pm}_{ov}\<l\ra$ differ from the curve $C\in\alpha^{\pm}_{lp}\<l\ra$ only by the choice of the point $a\in N_3(C)$.  Let $C'$ be the arc of the image of the branch $(1,0)$, bounded by singular points and having odd intersection multiplicity with the line $y=0$. The point $a$ lies on  $C'$  for  $C\in\tilde{\alpha}\<l\ra$, and on an oval  for  $C\in\alpha^{\pm}_{ov}\<l\ra$. Therefore, the same arguments prove the connectedness of these classes as well.
	
	\end{remark}
	\section{Application: Curves of genus 4 on a quadratic cone}\label{gen4}
	\begin{figure}[h]
	\begin{center}
		\scalebox{0.69}{\includegraphics{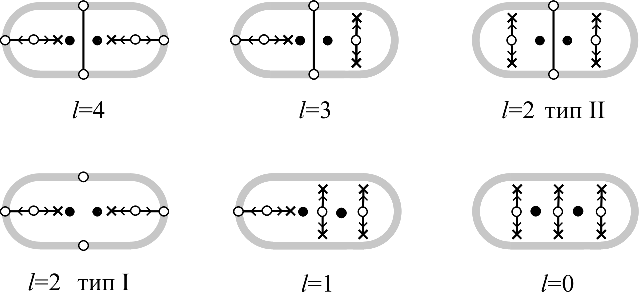}}\\
	\end{center}
	\caption{Skeletons of non-hyperbolic curves on $\Sigma_2$}\label{nhypcone}
	\end{figure}
	\begin{figure}[h]
	\begin{center}
		\scalebox{0.9}{\includegraphics{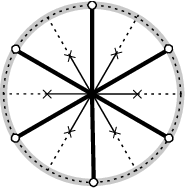}}\\
	\end{center}
	\caption{A dessin of a hyperbolic curve on $\Sigma_2$}\label{hypcone}
	\end{figure}
	The same arguments as in the proof of Lemma \ref{1white} yield a rigid isotopy classification of nonsingular real trigonal curves of genus 4 on the quadratic cone (cf. \cite[A3.6.1.]{DIK00}). After blowing up the vertex of the cone, such a curve yields a proper curve on $\Sigma_2$. The dessin of a nonhyperbolic curve is either a block of degree 6 or a junction of two cubic blocks (see Figure \ref{nhypcone}). Curves obtained from each other by reflection in the $x$-axis have the same dessin, but the real schemes symmetric to each other. A rigid isotopy class is determined by a complex scheme; their number is 11: for $l=0$, the scheme of a hyperbolic curve (see its dessin on Figure \ref{hypcone}, corresponding to an almost generic trigonal curve) and a scheme of type II; for $l=2$, a scheme of type I and two schemes of type II symmetric to each other; for $l=1, 3, 4$ -- two schemes symmetric to each other.
	
\begin{acknowledgments}
	The author thanks the reviewer for his advice and comments, which allowed  to correct errors and inaccuracies in the original version of the article.	
	
	The author's work was done on a subject of the State assignment FSWR-2023-0034.
\end{acknowledgments}	


\begin{thebibliography}{99}

\bibitem{R}
\refitem{article}
V.~A.~Rokhlin, \textquotedblleft Complex topological characteristics of real algebraic curves\textquotedblright,
Russian Math. Surveys, \textbf{33} (5),  85--98 (1978).

	
\bibitem{Z99}
\refitem{article}
 V.~I.~Zvonilov, \textquotedblleft Rigid isotopy classification
		of real algebraic curves of bidegree $(4,3)$ on a hyperboloid\textquotedblright,
		 Vestn.	Syktyvkar. Univ. Ser. 1 Mat. Mekh. Inform.,
	 \textbf{3},  81--88  (1999) [in Russian].

\bibitem{DIK00}
\refitem{book}
 A.~Degtyarev, I.~Itenberg, V.~Kharlamov, \emph{Real Enriques
		surfaces} (Lecture Notes in Math., 	Springer-Verlag,  \textbf{1746}, 2000).


\bibitem{Z03}
\refitem{article}
  V.~I.~Zvonilov, \textquotedblleft Appendix to: Rigid isotopy classification
	of real algebraic curves of bidegree $(4,3)$ on a hyperboloid\textquotedblright,  Vestn.
Syktyvkar. Univ. Ser. 1 Mat. Mekh. Inform.,
  \textbf{5},  239--242 (2003) [in Russian].

\bibitem{V}
\refitem{article}
O.~Ya.~Viro, \textquotedblleft Progress in the topology of real algebraic varieties over the last six years\textquotedblright, Russian Math. Surveys, \textbf{41} (3), 55--82 (1986).

\bibitem{Z91}
\refitem{article}
 V.~I.~Zvonilov, \textquotedblleft Complex topological invariants of real
	algebraic curves on a hyperboloid and on an ellipsoid\textquotedblright, 
St. Petersburg Math. J., \textbf{3} (5), 1023--1042 (1992).

\bibitem{DIK08}
\refitem{article}
 A.~Degtyarev, I.~Itenberg, V.~Kharlamov, \textquotedblleft  On deformation types of
		real elliptic surfaces\textquotedblright,   Amer. J. Math. \textbf{130} (6), 1561--1627 (2008).

\bibitem{Z21}
\refitem{article}
 V.~I.~Zvonilov,  \textquotedblleft Maximally inflected real trigonal curves on Hirzebruch surfaces\textquotedblright,   In: Contemporary Mathematics, Topology, Geometry, and Dynamics V.A.Rokhlin-Memorial.  \textbf{772}, United States of America: American Mathematical Society, 331--345 (2021). 

\bibitem{Degt}
\refitem{book}
 A.~Degtyarev, \emph{Topology of algebraic curves.
		An approach via dessins d'enfants} (De Gruyter Studies in Mathematics,   Berlin: Walter de Gruyter 
\&	Co., \textbf{44}, 2012).

\bibitem{Nik}
\refitem{article}
V.~V.~Nikulin, \textquotedblleft  Integral symmetric bilinear forms and some
of their applications\textquotedblright, 
Math. USSR-Izv., \textbf{14},
103--167 (1980).

\bibitem{It91}
\refitem{article}
	I.~V.~Itenberg, 
\textquotedblleft Rigid isotopy classification of curves of degree 6 with a nondegenerate double point\textquotedblright,  
Zap. Nauchn. Semin. Leningr. Otd. Mat. Inst. Steklova \textbf{193}, 72--89 (1991) [in Russian].

\bibitem{It92}
\refitem{article}
I.~V.~Itenberg, \textquotedblleft Curves of degree 6 with one non-degenerate double point and groups of monodromy of non-singular curves\textquotedblright, 
Lecture Notes in Mathematics, \textbf{1524}, Real Algebraic Geometry, Proceedings, Rennes 1991 (1992), 267--288. 

\bibitem{DK00}
\refitem{article}
A.~Degtyarev,  V.~Kharlamov, \textquotedblleft Topological properties of real
algebraic varieties: Du c\^ot\'e de chez Rokhlin\textquotedblright, Russ. Math.
	Surveys,  \textbf{55} (4), 735--814 (2000).
	
\bibitem{Z96}
\refitem{article}
V.~I.~Zvonilov, \textquotedblleft Stratified spaces of real algebraic curves
of bidegree (m,1) and (m,2) on the hyperboloid\textquotedblright, Amer. Math.
	Soc. Transl. (2),  \textbf{173}, 253--264 (1996).
	
\bibitem{DZ}
\refitem{article}
A.~I.~Degtyarev,  V.~I.~Zvonilov,  \textquotedblleft Rigid isotopy classification of real algebraic curves of
bidegree (3,3) on quadrics\textquotedblright,  Math. Notes,  \textbf{66} (6),
810--815 (1999).

\bibitem{NS05}
\refitem{article}
V.~V.~Nikulin, S.~Saito, \textquotedblleft Real $K3$		surfaces with non-symplectic involution and applications\textquotedblright,  Proc. London Math. Soc. (3), \textbf{90} (3), 591--654 (2005).

\bibitem{Z00}
\refitem{article}
 V.~I.~Zvonilov,  \textquotedblleft Rigid isotopies of real trigonal curves
on Hirzebruch surfaces\textquotedblright, J. Math. Sci., New York,  \textbf{113}
	(6), 804--809 (2000).
	
\bibitem{NS07}
\refitem{article}
V.~V.~Nikulin, S.~Saito, \textquotedblleft Real $K3$		surfaces with non-symplectic involution and applications II\textquotedblright,  Proc. London Math. Soc. (3), \textbf{95} (1), 20--48 (2007).

\bibitem{Z06}
\refitem{article}
V.~I.~Zvonilov, \textquotedblleft Rigid isotopies of trinomial curves with the maximal	number of ovals\textquotedblright,  Vestn. Syktyvkar. Univ. Ser. 1 Mat. Mekh. Inform., \textbf{6},  45--66 (2006) (in Russian).

\bibitem{DIZ}
\refitem{article}
 A.~Degtyarev,		I.~Itenberg, V.~Zvonilov, \textquotedblleft Real trigonal curves and real elliptic surfaces of type I\textquotedblright,   J. Reine Angew. Math., \textbf{686},  221--246 (2014).

\bibitem{JP}
\refitem{article}
A.~Jaramillo Puentes, \textquotedblleft Rigid isotopy classification of generic rational curves of degree 5 in the real projective plane\textquotedblright,  Geom. Dedicata, \textbf{211}, 1--70 (2021).
	 
\bibitem{N}
\refitem{article}
M.~Nagata, \textquotedblleft On rational surfaces I. Irreducible curves of arithmetic genus 0 or 1\textquotedblright,  Memoirs of the College of Science, Univ. of Kyoto, Series A, Mathematics, 
\textbf{XXXII} (3), 351--370 (1960). 

\end{thebibliography}
\end{document}